

\documentclass[3p]{elsarticle}

\usepackage{geometry}

\usepackage{amsmath,amsthm,amssymb,mathrsfs}
\usepackage{enumerate}
\usepackage{slashed}
\usepackage{txfonts,dsfont}
\usepackage{aliascnt}
\usepackage[breaklinks,colorlinks]{hyperref}
\usepackage{comment}


\makeatletter
\newcommand{\autorefcheckize}[1]{%
  \expandafter\let\csname @@\string#1\endcsname#1%
  \expandafter\DeclareRobustCommand\csname relax\string#1\endcsname[1]{%
    \csname @@\string#1\endcsname{##1}\wrtusdrf{##1}}%
  \expandafter\let\expandafter#1\csname relax\string#1\endcsname
}
\makeatother

\theoremstyle{plain}

\newtheorem{theorem}{Theorem}[section]

\newaliascnt{lem}{theorem}
\newtheorem{lem}[lem]{Lemma}
 \aliascntresetthe{lem}

\newaliascnt{cor}{theorem}
\newtheorem{cor}[cor]{Corollary}
 \aliascntresetthe{cor}

\newaliascnt{prop}{theorem}

 \aliascntresetthe{prop}

\theoremstyle{remark}

\newtheorem{rem}{Remark}[section]

\theoremstyle{definition}

\numberwithin{equation}{section}

\newcommand{\norm}[1]{\left\lVert#1\right\rVert}
\newcommand{\abs}[1]{\left\lvert#1\right\rvert}
\newcommand{\set}[1]{\left\{#1\right\}}

\newcommand*{\Rmn}[1]{\uppercase\expandafter{\romannumeral#1}}
\newcommand*{\dif}{\mathop{}\!\mathrm{d}}


\journal{XXX}

\allowdisplaybreaks

\begin{document}

\begin{frontmatter}

\title{Existence of Kazdan-Warner equation with sign-changing prescribed function\tnoteref{SZ}}

\author[whu1,whu2]{Linlin Sun}
\address[whu1]{School of Mathematics and Statistics, Wuhan University, Wuhan 430072, China}
\address[whu2]{Hubei Key Laboratory of Computational Science, Wuhan University, Wuhan, 430072, China}
\ead{sunll@whu.edu.cn}

\author[mpi]{Jingyong Zhu\texorpdfstring{\corref{zjy}}{}}
\address[mpi]{Max Planck Institute for Mathematics in the Sciences, Inselstrasse 22, 04103 Leipzig, Germany}
\ead{jizhu@mis.mpg.de}


\cortext[zjy]{Corresponding author.}

\tnotetext[SZ]{This research is partially supported by the National Natural Science Foundation of China (Grant No. 11971358, 11801420). The first author would like to thank Prof. Chen Xuezhang for his useful suggestions and support.}

\begin{abstract}

In this paper, we study the following Kazdan-Warner equation with sign-changing prescribed function $h$
\begin{align*}
    -\Delta u=8\pi\left(\dfrac{he^{u}}{\int_{\Sigma}he^{u}}-1\right)
\end{align*}
on a closed Riemann surface whose area equals one. The solutions are the critical points of the functional $J_{8\pi}$ which is defined by
\begin{align*}
    J_{8\pi}(u)=\dfrac{1}{16\pi}\int_{\Sigma}\abs{\nabla u}^2+\int_{\Sigma}u-\ln\abs{\int_{\Sigma}he^{u}},\quad u\in H^1\left(\Sigma\right).
\end{align*}
We prove the existence of minimizer of $J_{8\pi}$ by assuming 
\begin{equation*}
    \Delta \ln h^++8\pi-2K>0
    \end{equation*}
    at each maximum point of $2\ln h^++A$, where $K$ is the Gaussian curvature, $h^+$ is the positive part of $h$ and $A$ is the regular part of the Green function. This generalizes the existence result of Ding, Jost, Li and Wang [Asian J. Math. 1(1997), 230-248] to the sign-changing prescribed function case. We are also interested in the blow-up behavior of a sequence $u_{\varepsilon}$ of critical points of $J_{8\pi-\varepsilon}$ with $\int_{\Sigma}he^{u_{\varepsilon}}=1, \lim\limits_{\varepsilon\searrow   0}J_{8\pi-\varepsilon}\left(u_{\varepsilon}\right)<\infty$  and obtain the following identity during the blow-up process
\begin{equation*}
    -\varepsilon=\frac{16\pi}{(8\pi-\varepsilon)h(p_\varepsilon)}\left[\Delta \ln h(p_\varepsilon)+8\pi-2K(p_\varepsilon)\right]\lambda_{\varepsilon}e^{-\lambda_{\varepsilon}}+O\left(e^{-\lambda_{\varepsilon}}\right),
\end{equation*}
where $p_\varepsilon$ and $\lambda_\varepsilon$ are the maximum point and maximum value of $u_\varepsilon$, respectively. Moreover, $p_{\varepsilon}$ converges to the   blow-up point which is a critical point of the function $2\ln h^{+}+A$. 
\end{abstract}

\begin{keyword}
Kazdan-Warner equation \sep sign-changing prescribed function \sep existence.

 \MSC[2020] 35B33 \sep 	58J05

\end{keyword}

\end{frontmatter}


\section{Introduction}
Let $\Sigma$ be a closed Riemann surface  whose area equals one. Let $h$ be a nonzero smooth function on $\Sigma$ such that $\max\limits_{\Sigma}h>0$. For each positive number $\rho$, we consider the following functional
\begin{align*}
    J_{\rho}(u)=\dfrac{1}{2\rho}\int_{\Sigma}\abs{\nabla u}^2+\int_{\Sigma}u-\ln\abs{\int_{\Sigma}he^{u}},\quad u\in H^1\left(\Sigma\right).
\end{align*}
 The critical points of $J_{\rho}$ are solutions to  the following mean field equation
\begin{align}\label{eq:KW}
    -\Delta u=\rho\left(\dfrac{he^{u}}{\int_{\Sigma}he^{u}}-1\right)
\end{align}
where $\Delta$ is the Laplace operator on $\Sigma$.

Mean field equation has a strong relationship with Kazdan-Warner equation. Forty years ago, Kazdan and Warner \cite{kazdan1974curvature} considered the solvability of the equation
\begin{align*}
    -\Delta u=he^u-\rho,
\end{align*}
where $\rho$ is a constant and $h$ is some smooth prescribed function. When $\rho>0$, the equation above is equivalent to the mean field equation \eqref{eq:KW}. 
The special case $\rho=8\pi$ is sometimes called the Kazdan-Warner equation. In particular, when $\Sigma$ is the standard sphere $\mathbb{S}^2$, it is called the Nirenberg problem, which comes from the conformal geometry. It has been studied by Moser \cite{moser1971sharp}, Kazdan and Warner \cite{kazdan1974curvature}, Chen
and Ding \cite{chen1987scalar}, Chang and Yang \cite{chang1987prescribing} and others. The mean field equation \eqref{eq:KW} appears in various context such as the abelian Chern-Simons-Higgs models.  The existence of solutions
of \eqref{eq:KW} and its evolution problem has been widely studied in recent decades (see for example \cite{BreMer91uniform,casteras2015mean,CheLin02sharp,chen2003topological,DinJosLiWan97differential,ding1999existence,djadli2008existence,DM2008existence,LiZhu19convergence,Lin00topological,Mal08morse,struwe2002curvature,SunZhu20global} and the references therein).

In this paper, we consider the existence theory of Kazdan-Warner equation ($\rho=8\pi$) with sign-changing prescribed function. The key is to analyze the asymptotic behavior of the blow-up solutions $u_\varepsilon$ (see \eqref{blow-up sequence}) and the functional $J_{8\pi}$. We prove the following identity near the blow-up point, whose analogue was proved by Chen and Lin in \cite{CheLin02sharp} when the prescribed function $h$ is positive. 

\begin{theorem}\label{blow-up id}
Let $h$ be positive somewhere on $\Sigma$ and $u_{\varepsilon}$ a blow-up sequence satisfying
\begin{align}\label{blow-up sequence}
    -\Delta u_{\varepsilon}=\left(8\pi-\varepsilon\right)\left(he^{\varepsilon}-1\right),\quad\text{in}\ \Sigma
\end{align}
and
\begin{align}\label{eq:upper-J}
    \lim_{\varepsilon\searrow 0}J_{8\pi-\varepsilon}\left(u_{\varepsilon}\right)<\infty.
\end{align}
Then up to a subsequence, for $p_{\varepsilon}\in\Sigma$ with
\begin{align*}
    \lambda_{\varepsilon}=\max_{\Sigma}u_{\varepsilon}=u_{\varepsilon}\left(p_{\varepsilon}\right),
\end{align*}
we have
\begin{equation*}
    -\varepsilon=\frac{16\pi}{(8\pi-\varepsilon)h(p_\varepsilon)}[\Delta \ln h^+(p_\varepsilon)+8\pi-2K(p_\varepsilon)]\lambda_{\varepsilon}e^{-\lambda_{\varepsilon}}+O\left(e^{-\lambda_{\varepsilon}}\right),
\end{equation*}
where $K$ denotes the Gaussian curvature of $\Sigma$.
\end{theorem}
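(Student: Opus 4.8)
\medskip
\noindent\textbf{Sketch of the proof.} I would follow the local Pohozaev‑identity scheme of Chen--Lin for positive $h$, the new feature being to handle the sign change of $h$. First, since $\lambda_{\varepsilon}\to\infty$ and $\int_{\Sigma}he^{u_{\varepsilon}}\,d\mu_{g}=1$, the concentrating measure $he^{u_{\varepsilon}}\,d\mu_{g}$ cannot charge $\{h\le 0\}$; hence $h(p_{\varepsilon})>0$ and $h\equiv h^{+}>0$ on a fixed ball $B_{\delta}(p_{\varepsilon})$, so $\Delta\ln h(p_{\varepsilon})=\Delta\ln h^{+}(p_{\varepsilon})$ --- this is the only place the hypothesis and the appearance of $h^{+}$ enter. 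I then work in isothermal coordinates centered at $p_{\varepsilon}$, $g=e^{\varphi_{\varepsilon}}(dx_{1}^{2}+dx_{2}^{2})$ with $\varphi_{\varepsilon}(0)=0$ and $\nabla\varphi_{\varepsilon}(0)=0$; the Gauss equation $\Delta_{0}\varphi_{\varepsilon}=-2Ke^{\varphi_{\varepsilon}}$ gives $\Delta_{0}\varphi_{\varepsilon}(0)=-2K(p_{\varepsilon})$, and \eqref{blow-up sequence} becomes $-\Delta_{0}u_{\varepsilon}=(8\pi-\varepsilon)(he^{u_{\varepsilon}}-1)e^{\varphi_{\varepsilon}}$ in $B_{\delta}$.

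The core of the argument is a sharp description of $u_{\varepsilon}$. Using \eqref{eq:upper-J} one shows there is a single bubble of mass $8\pi$: after rescaling at the scale $r_{\varepsilon}$ with $r_{\varepsilon}^{2}e^{\lambda_{\varepsilon}}=[(8\pi-\varepsilon)h(p_{\varepsilon})]^{-1}$, one has $u_{\varepsilon}(r_{\varepsilon}\,\cdot)-\lambda_{\varepsilon}\to V:=-2\ln\bigl(1+|\cdot|^{2}/8\bigr)$ in $C^{2}_{\mathrm{loc}}$. Beyond this one needs the next‑order correction: on the neck/outer region $B_{\delta}\setminus B_{Lr_{\varepsilon}}$, the Green representation $u_{\varepsilon}(x)=(8\pi-\varepsilon)\int_{\Sigma}G(x,y)(he^{u_{\varepsilon}}-1)\,d\mu_{g}+\overline{u_{\varepsilon}}$ together with $he^{u_{\varepsilon}}\,d\mu_{g}\rightharpoonup\delta_{p_{\varepsilon}}$ gives $u_{\varepsilon}(x)=-\lambda_{\varepsilon}-4\ln|x|+(8\pi-\varepsilon)A(x,p_{\varepsilon})+o(1)$ with a quantitative remainder; matching this to the inner bubble fixes $\overline{u_{\varepsilon}}$ up to $O(r_{\varepsilon}^{2}\lambda_{\varepsilon})$ and produces the bubble's correction driven by the linear parts of $\ln h$ and of $A(\cdot,p_{\varepsilon})$. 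The precision one must achieve is exactly that every remainder entering the Pohozaev identity below is $O(e^{-\lambda_{\varepsilon}})$, strictly smaller than $\lambda_{\varepsilon}e^{-\lambda_{\varepsilon}}$.

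Now I test $-\Delta_{0}u_{\varepsilon}=(8\pi-\varepsilon)(he^{u_{\varepsilon}}-1)e^{\varphi_{\varepsilon}}$ against $x\cdot\nabla u_{\varepsilon}$ over $B_{\delta}$:
\begin{equation*}
\delta\int_{\partial B_{\delta}}\Bigl[\tfrac12|\nabla u_{\varepsilon}|^{2}-(\partial_{\nu}u_{\varepsilon})^{2}\Bigr]\,ds=(8\pi-\varepsilon)\int_{B_{\delta}}(he^{u_{\varepsilon}}-1)e^{\varphi_{\varepsilon}}\,(x\cdot\nabla u_{\varepsilon})\,dx.
\end{equation*}
In the $he^{u_{\varepsilon}}$‑part of the right‑hand side I integrate by parts, transferring the gradient onto $he^{\varphi_{\varepsilon}}$; this yields $-2\sigma_{\varepsilon}$ with $\sigma_{\varepsilon}:=(8\pi-\varepsilon)\int_{B_{\delta}}he^{u_{\varepsilon}}\,d\mu_{g}=(8\pi-\varepsilon)+O(e^{-\lambda_{\varepsilon}})$, the second‑order Taylor term $-\tfrac{8\pi-\varepsilon}{2}\bigl(\Delta\ln h(p_{\varepsilon})+\Delta_{0}\varphi_{\varepsilon}(0)\bigr)\int_{B_{\delta}}|x|^{2}he^{u_{\varepsilon}}e^{\varphi_{\varepsilon}}\,dx$, plus first‑order Taylor terms. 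Inserting the bubble profile, $\int_{B_{\delta}}|x|^{2}he^{u_{\varepsilon}}e^{\varphi_{\varepsilon}}\,dx=\tfrac{c_{0}}{(8\pi-\varepsilon)h(p_{\varepsilon})}\lambda_{\varepsilon}e^{-\lambda_{\varepsilon}}+O(e^{-\lambda_{\varepsilon}})$ for an explicit universal $c_{0}$, the factor $\lambda_{\varepsilon}$ coming from the logarithmic divergence of $\int|y|^{2}(1+|y|^{2}/8)^{-2}\,dy$ cut off at $|y|\sim\delta/r_{\varepsilon}$; together with $\Delta_{0}\varphi_{\varepsilon}(0)=-2K(p_{\varepsilon})$ this feeds $\Delta\ln h^{+}(p_{\varepsilon})-2K(p_{\varepsilon})$ into the identity at the rate $\lambda_{\varepsilon}e^{-\lambda_{\varepsilon}}$. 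The first‑order Taylor terms do not vanish individually; combined with the linear part of the outer expansion they reorganize into $\nabla(2\ln h^{+}+A)(p_{\varepsilon})$, controlled by the companion identity obtained by testing against $\nabla u_{\varepsilon}$ --- this is precisely the mechanism forcing $p_{\varepsilon}$ toward a critical point of $2\ln h^{+}+A$, and it makes those terms negligible here. Finally, expanding the remaining $-(8\pi-\varepsilon)\int_{B_{\delta}}e^{\varphi_{\varepsilon}}(x\cdot\nabla u_{\varepsilon})\,dx$ and the boundary integral via the outer asymptotics, the $\delta$‑dependence cancels between the two sides, the constant $8\pi$ is produced inside the bracket by the interaction of the bubble with the regular part $(8\pi-\varepsilon)A$ of the Green function (via $\Delta_{0}A\equiv 1$ in the area‑normalized metric), and the left‑hand side $-\varepsilon$ appears because $\sigma_{\varepsilon}=(8\pi-\varepsilon)+O(e^{-\lambda_{\varepsilon}})$, i.e.\ almost all the total mass $8\pi-\varepsilon$ sits in $B_{\delta}$ while the bubble itself carries exactly $8\pi$; collecting everything gives the stated identity.

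The main obstacle is the precision demanded in the second and third steps: one must establish the blow‑up asymptotics --- bubble, neck, outer region, and the matching --- sharply enough that, after several integrations by parts, \emph{all} remainders are genuinely $O(e^{-\lambda_{\varepsilon}})$; in particular the $O(1)$ pieces of the outer expansion and the $O(\delta^{2})$ boundary contributions must be shown to cancel exactly between the two sides rather than merely be bounded, and one must track all universal constants to land precisely on $16\pi/[(8\pi-\varepsilon)h(p_{\varepsilon})]$ and on the exact combination $\Delta\ln h^{+}+8\pi-2K$. The sign change of $h$ introduces no essential new difficulty once the blow‑up has been localized in $\{h>0\}$, where the analysis reduces to the positive‑prescribed‑function case.
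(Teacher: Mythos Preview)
Your sketch is plausible but takes a genuinely different route from the paper. The paper follows Chen--Lin literally: it introduces the error term $\eta_{\varepsilon}=u_{\varepsilon}-v_{\varepsilon}-(G^{*}_{\varepsilon}-G^{*}_{\varepsilon}(p_{\varepsilon}))$, tests the \emph{error equation} $\Delta_{0}\eta_{\varepsilon}=-(8\pi-\varepsilon)h(p_{\varepsilon})e^{v_{\varepsilon}}H(x,\eta_{\varepsilon})+O(e^{-\lambda_{\varepsilon}})$ against the radial kernel element $\psi=\dfrac{1-a|x-y_{\varepsilon}|^{2}}{1+a|x-y_{\varepsilon}|^{2}}$ of the linearized bubble operator, and uses that $\psi\to-1$, $\nabla\psi\to0$ on $\partial B_{\delta_{0}}$ to read off $-\int_{\partial B_{\delta_{0}}}\partial_{\nu}\eta_{\varepsilon}$, which by a separate lemma equals $\rho_{\varepsilon}-8\pi$ up to $O(e^{-\lambda_{\varepsilon}})$. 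The volume side is then a Taylor expansion of $H(x,0)$, and the $8\pi$ in the bracket arises because $\Delta_{0}G^{*}_{\varepsilon}(0)=\rho_{\varepsilon}$.

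Your Pohozaev identity (testing the \emph{full} equation against $x\cdot\nabla u_{\varepsilon}$) is the ``dual'' computation: since $\psi=1+\tfrac12\,x\cdot\nabla v_{\varepsilon}$, the two test functions encode the same dilation symmetry, so morally they must yield the same identity. The practical difference is where the work lies. In the paper's scheme every quantity entering the volume integral is already small (it involves $\eta_{\varepsilon}$, controlled by the sharp Chen--Lin estimates recorded as \autoref{remainder}), and the boundary contribution is manifestly $-\int\partial_{\nu}\eta_{\varepsilon}$ with no $O(1)$ cancellations to arrange. In your scheme the boundary integral $\delta\int_{\partial B_{\delta}}\bigl[\tfrac12|\nabla u_{\varepsilon}|^{2}-(\partial_{\nu}u_{\varepsilon})^{2}\bigr]$ is itself $-16\pi+O(\delta)$, the bulk term $-2\sigma_{\varepsilon}$ is $-16\pi+2\varepsilon+O(e^{-\lambda_{\varepsilon}})$, and the residual term $-(8\pi-\varepsilon)\int e^{\varphi_{\varepsilon}}(x\cdot\nabla u_{\varepsilon})$ carries an $O(\delta^{2})$ piece; you must show all $\delta$--dependent and $O(1)$ pieces cancel \emph{exactly}, leaving only $-\varepsilon$ against the $\lambda_{\varepsilon}e^{-\lambda_{\varepsilon}}$ correction (note the factor of $2$ you will have to track). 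Carrying that cancellation through at the required $O(e^{-\lambda_{\varepsilon}})$ precision will, in the end, force you to prove essentially the same pointwise estimates on $u_{\varepsilon}-v_{\varepsilon}-G^{*}_{\varepsilon}$ that the paper imports from Chen--Lin (\autoref{estimate B}--\autoref{boundary term}), so the technical cores converge. What your approach buys is a more self-contained, PDE--flavoured derivation that makes the geometric origin of each term (dilation weight, conformal factor, Green regular part) transparent; what the paper's approach buys is that the delicate cancellations are pre-packaged in the decomposition $u_{\varepsilon}=v_{\varepsilon}+G^{*}_{\varepsilon}+\eta_{\varepsilon}$, so the computation is shorter and the constants fall out mechanically.
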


This yields a uniform bound of minimizers as $\varepsilon\searrow 0$  provided that $\Delta \ln h^++8\pi-2K>0$ at all blow-up points. Let $G(q,p)$ be the Green function on $\Sigma$ with singularity at $p$, i.e.,
\begin{align*}
    \Delta G(\cdot,p)=1-\delta_{p},\quad\int_{\Sigma}G(\cdot,p)=0.
\end{align*}
Under a local normal coordinate $x$ centering at $p$, we have
\begin{equation}\label{expansion G}
8\pi G(x,p)=-4\ln\abs{x}+A(p)+b_1x_1+b_2x_2+c_1x_1^2+2c_2x_1x_2+c_3x_2^2+O\left(\abs{x}^3\right).
\end{equation}
By \autoref{cp position}, we know the blow-up point has to be a critical point of $2\ln h^+(p)+A(p)$. Thus, we get an  existence result.
\begin{cor}
Let $\Sigma$ be a compact Riemann surface and $K(p)$ be its Gaussian curvature. Suppose $h(p)$ is a smooth function which is positive somewhere on $\Sigma$. If we have the following for all critical points of $2\ln h^++A$ 
\begin{equation*}
    \Delta \ln h^++8\pi-2K>0,
\end{equation*}
then equation \eqref{eq:KW} has a solution for $\rho=8\pi$.
\end{cor}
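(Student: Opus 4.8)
The plan is to run the standard subcritical approximation at $\rho = 8\pi - \varepsilon$ and use the sharp identity of \autoref{blow-up id} to exclude blow-up. First I would fix $\varepsilon \in (0, 4\pi)$. Since $J_{8\pi-\varepsilon}(u + c) = J_{8\pi-\varepsilon}(u)$ for every constant $c$, it suffices to minimize over $\set{u \in H^1(\Sigma) : \int_\Sigma u = 0}$; on this set the Moser--Trudinger inequality $\int_\Sigma e^{u} \le C \exp\left(\frac{1}{16\pi}\int_\Sigma \abs{\nabla u}^2\right)$ gives $-\ln\abs{\int_\Sigma h e^{u}} \ge -\frac{1}{16\pi}\int_\Sigma\abs{\nabla u}^2 - C'$, whence $J_{8\pi-\varepsilon}$ is coercive (because $\frac{1}{2(8\pi-\varepsilon)} > \frac{1}{16\pi}$) and attains its infimum at a smooth $u_\varepsilon$ solving \eqref{blow-up sequence}. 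Adding a constant, normalize $\int_\Sigma h e^{u_\varepsilon} = 1$, which is legitimate since $h$ is positive somewhere, so $\int_\Sigma h e^{v} > 0$ for a suitable competitor $v$. Fixing such a $v$ and using $8\pi - \varepsilon > 4\pi$, I obtain the uniform upper bound
\[
J_{8\pi-\varepsilon}(u_\varepsilon) = \inf_{H^1(\Sigma)} J_{8\pi-\varepsilon} \le J_{8\pi-\varepsilon}(v) < \frac{1}{8\pi}\int_\Sigma\abs{\nabla v}^2 + \int_\Sigma v - \ln\int_\Sigma h e^{v} =: C_0,
\]
so hypothesis \eqref{eq:upper-J} holds along the family $\set{u_\varepsilon}$.

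Next I would apply the Brezis--Merle / Li--Shafrir concentration-compactness alternative to the sequence $u_\varepsilon$ solving \eqref{blow-up sequence} with unit mass and uniformly bounded energy: either (i) $u_\varepsilon$ is bounded in $L^\infty(\Sigma)$, or (ii) $u_\varepsilon$ blows up, in which case mass quantization pins a single blow-up point $p_0$ carrying all the mass $8\pi$, and $h(p_0) > 0$ (a positive concentration can occur only on the positivity set of $h$). In case (ii) the assumption enters: by \autoref{cp position}, $p_0$ is a critical point of $2\ln h^+ + A$, so $\Delta\ln h^+(p_0) + 8\pi - 2K(p_0) > 0$; since the maximum points satisfy $p_\varepsilon \to p_0$ and $h(p_\varepsilon) \to h(p_0) > 0$, the coefficient $\frac{16\pi}{(8\pi-\varepsilon)h(p_\varepsilon)}\left[\Delta\ln h^+(p_\varepsilon) + 8\pi - 2K(p_\varepsilon)\right]$ is bounded below by some $c_0 > 0$ for all small $\varepsilon$. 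Because blow-up forces $\lambda_\varepsilon \to +\infty$, \autoref{blow-up id} then yields
\[
-\varepsilon \ge c_0\lambda_\varepsilon e^{-\lambda_\varepsilon} - C_1 e^{-\lambda_\varepsilon} = e^{-\lambda_\varepsilon}\left(c_0\lambda_\varepsilon - C_1\right) > 0
\]
once $\varepsilon$ is small, which is impossible. Hence (ii) cannot happen.

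Therefore $\norm{u_\varepsilon}_{L^\infty(\Sigma)} \le C$, and elliptic estimates for \eqref{blow-up sequence} upgrade this to uniform $C^\infty$-bounds, so along a subsequence $u_\varepsilon \to u_0$ in $C^\infty(\Sigma)$ with $\int_\Sigma h e^{u_0} = 1 > 0$. Passing to the limit in \eqref{blow-up sequence} shows $u_0$ solves \eqref{eq:KW} with $\rho = 8\pi$; moreover, since $J_{8\pi}(w) = \lim_{\varepsilon\searrow 0} J_{8\pi-\varepsilon}(w) \ge \lim_{\varepsilon\searrow 0}\inf_{H^1(\Sigma)} J_{8\pi-\varepsilon} = J_{8\pi}(u_0)$ for every $w \in H^1(\Sigma)$, $u_0$ is in fact a minimizer of $J_{8\pi}$. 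I expect the genuinely delicate point to be the compactness step for sign-changing $h$: one must show that the only alternative to a uniform $L^\infty$ bound is a single-point blow-up of quantized mass $8\pi$ located in $\set{h > 0}$, and verify that the normalized minimizers $u_\varepsilon$ really do satisfy the hypotheses \eqref{blow-up sequence}--\eqref{eq:upper-J} of \autoref{blow-up id}. Since $h e^{u_\varepsilon}$ is bounded above on $\set{h \le 0}$ and contributes there with a favorable sign, the classical positive-$h$ machinery localizes near $\set{h > 0}$ without essential change, but this localization has to be carried out with care.
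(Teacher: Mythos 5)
Your proposal is correct and follows essentially the same route as the paper: subcritical minimizers $u_\varepsilon$ of $J_{8\pi-\varepsilon}$ (Kazdan--Warner via Trudinger--Moser) normalized by $\int_\Sigma he^{u_\varepsilon}=1$, the bound \eqref{eq:upper-J}, the single-point blow-up analysis with $h(p_0)>0$ from the preliminaries, \autoref{cp position} to place $p_0$ at a critical point of $2\ln h^++A$, and then the sign contradiction $-\varepsilon<0$ versus the positive right-hand side in \autoref{blow-up id}. Minor cosmetic differences (a fixed competitor $v$ instead of the monotonicity of $\rho\mapsto\inf J_\rho$ for \eqref{eq:upper-J}) do not change the argument.
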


Furthermore, if $u_{\varepsilon}$ is a minimizer of $J_{8\pi-\varepsilon}$, we can show the blow-up point is actually the maximum point of $2\ln h^++A$.

\begin{theorem}\label{functional value}
 If $u_{\varepsilon}$ is a minimizer of $J_{8\pi-\varepsilon}$ and blows up as $\varepsilon\searrow 0$, then the blow-up point $p_0$ is a maximum point of the function $2\ln h^++A$. Moreover,
\begin{align*}
    \inf_{u\in H^1\left(\Sigma\right)}J_{8\pi}=-1-\ln\pi-\left(\ln h(p_0)+\frac12 A(p_0)\right),
\end{align*}
and there is a sequence $\phi_{\varepsilon}\in H^1\left(\Sigma\right)$ such that
\begin{align*}
\begin{split}
    J_{8\pi}\left(\phi_{\varepsilon}\right)&=-1-\ln\pi-\left(\ln h(p_0)+\frac12 A(p_0)\right)\\
    &\quad-\dfrac{1}{4}\left(\Delta\ln h(p_0)+8\pi-2K(p_0)\right)\varepsilon\ln\varepsilon^{-1}
    +o\left(\varepsilon\ln\varepsilon^{-1}\right).
    \end{split}
\end{align*}
\end{theorem}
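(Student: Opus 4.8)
The plan is to run the standard ``blow-up from above / construction from below'' argument, already familiar from Ding--Jost--Li--Wang and Chen--Lin, but keeping careful track of the sign-changing features through the positive part $h^+$.

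\medskip

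\noindent\textbf{Step 1: The lower bound via the blow-up sequence.} Let $u_\varepsilon$ be a minimizer of $J_{8\pi-\varepsilon}$, normalized so that $\int_\Sigma h e^{u_\varepsilon}=1$ and $\int_\Sigma u_\varepsilon=\bar u_\varepsilon$. Since $u_\varepsilon$ blows up, the bubbling analysis developed earlier in the paper (the quantization $\int_\Sigma h e^{u_\varepsilon}\to 8\pi$ worth of mass concentrating, the location of $p_\varepsilon$, and the Green's-function expansion \eqref{expansion G}) shows that near $p_0$ the function $u_\varepsilon$ looks like a standard bubble $-2\ln(1+\tfrac{\pi h(p_\varepsilon)}{4}e^{\lambda_\varepsilon}|x|^2)+\lambda_\varepsilon$ glued to $8\pi G(\cdot,p_0)+$ (controlled error) away from $p_0$. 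Plugging this profile into $J_{8\pi-\varepsilon}$ and carrying out the asymptotic expansion of the three terms of the functional (the Dirichlet energy contributes $-2\lambda_\varepsilon + \ldots$, the linear term $\int u_\varepsilon$ and the $-\ln|\int h e^{u_\varepsilon}|$ term combine with the $G$-part via \eqref{expansion G}), one reads off
\begin{align*}
    J_{8\pi-\varepsilon}(u_\varepsilon)\ge -1-\ln\pi-\Bigl(\ln h(p_0)+\tfrac12 A(p_0)\Bigr)+o(1).
\end{align*}
Since $J_{8\pi-\varepsilon}\to J_{8\pi}$ in the relevant sense on the minimizing sequence and $u_\varepsilon$ is a minimizer of $J_{8\pi-\varepsilon}$, this gives the inequality $\inf_{H^1} J_{8\pi}\ge -1-\ln\pi-(\ln h(p_0)+\tfrac12 A(p_0))$. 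The $h^+$ versus $h$ issue enters because along the sequence $he^{u_\varepsilon}$ is positive where the mass lives, so $h(p_0)>0$ and $\ln h(p_0)=\ln h^+(p_0)$; this is precisely where one needs $p_0$ to be a critical point of $2\ln h^++A$, invoked from \autoref{cp position}.

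\medskip

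\noindent\textbf{Step 2: The matching upper bound and identification of $p_0$ as a maximizer.} Conversely, for an arbitrary point $q$ with $h(q)>0$ one builds an explicit test function $\phi_{\varepsilon,q}\in H^1(\Sigma)$: a truncated bubble of scale $e^{-\lambda}$ (with $\lambda\sim\tfrac12\ln\varepsilon^{-1}$) centered at $q$, matched to $8\pi G(\cdot,q)$ outside a small ball, then normalized. A direct but careful computation of $J_{8\pi}(\phi_{\varepsilon,q})$ — expanding $e^{\phi_{\varepsilon,q}}$, using \eqref{expansion G} including the second-order coefficients $c_1,c_2,c_3$, and the Taylor expansion of $\ln h$ at $q$ (here $\Delta\ln h(q)$ appears from the quadratic term, together with $-2K(q)$ from the conformal factor / area normalization and $+8\pi$ from the $c_i$'s) — yields exactly
\begin{align*}
    J_{8\pi}(\phi_{\varepsilon,q})=-1-\ln\pi-\Bigl(\ln h(q)+\tfrac12 A(q)\Bigr)-\tfrac14\bigl(\Delta\ln h(q)+8\pi-2K(q)\bigr)\varepsilon\ln\varepsilon^{-1}+o(\varepsilon\ln\varepsilon^{-1}).
\end{align*}
Taking $q=p_0$ gives the claimed sequence $\phi_\varepsilon$ and, letting $\varepsilon\to 0$, the upper bound $\inf_{H^1}J_{8\pi}\le -1-\ln\pi-(\ln h(q)+\tfrac12 A(q))$ for \emph{every} such $q$; maximizing the right-hand side over $q$ shows that the infimum equals $-1-\ln\pi-\max_q(\ln h^+(q)+\tfrac12 A(q))$. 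Combining with the lower bound of Step 1, which produced the same quantity evaluated at $p_0$, forces $\ln h^+(p_0)+\tfrac12 A(p_0)=\max(\ln h^++\tfrac12 A)$, i.e.\ $p_0$ is a maximum point of $2\ln h^++A$, and simultaneously pins down $\inf_{H^1}J_{8\pi}$ to the stated value.

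\medskip

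\noindent\textbf{Expected main obstacle.} The routine-but-delicate heart of the argument is the expansion in Step 2 (equivalently the precise constant in Step 1): one must carry the test-function computation to order $\varepsilon\ln\varepsilon^{-1}$, which requires simultaneously the second-order expansion \eqref{expansion G} of the Green function, the second-order Taylor expansion of $\ln h$ (legitimate only because $h(q)>0$, so $h=h^+$ locally and $\ln h$ is smooth there — the sign-changing hypothesis makes this a genuinely local statement that must be handled with care, since $\ln h^+$ is not globally smooth), and the effect of the conformal factor on the area normalization, which is the source of the $-2K$ term. Keeping all error terms genuinely $o(\varepsilon\ln\varepsilon^{-1})$ rather than merely $O(\varepsilon)$ — in particular controlling the tail of the truncated bubble and the cross terms between the bubble and the regular part — is where the bulk of the work lies; the blow-up identity of \autoref{blow-up id} is the tool that guarantees the lower-bound expansion is sharp and matches.
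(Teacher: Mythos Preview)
Your proposal follows the paper's two-part strategy (blow-up lower bound $+$ test-function upper bound, then combine to force $p_0$ to be a maximizer) and is essentially correct. Two points deserve correction.

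First, your Step~2 expansion for an \emph{arbitrary} $q$ with $h(q)>0$ is missing a term. The Ding--Jost--Li--Wang computation actually produces
\begin{align*}
    J_{8\pi}(\phi_{\varepsilon,q})=-1-\ln\pi-\Bigl(\ln h(q)+\tfrac12 A(q)\Bigr)
    -\tfrac14\Bigl(\Delta\ln h(q)+8\pi-2K(q)+\bigl|\nabla\bigl(\ln h+\tfrac12 A\bigr)(q)\bigr|^{2}\Bigr)\varepsilon\ln\varepsilon^{-1}+o(\varepsilon\ln\varepsilon^{-1}),
\end{align*}
with the extra gradient term vanishing precisely when $q$ is a critical point of $2\ln h^{+}+A$. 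This does not damage your argument: for the upper bound on $\inf J_{8\pi}$ only the leading constant matters, and for the refined sequence $\phi_\varepsilon=\phi_{\varepsilon,p_0}$ the gradient term vanishes because $p_0$ is a critical point (by \autoref{cp position}, or a posteriori as a maximizer). But the formula you wrote for general $q$ is false as stated.

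Second, \autoref{blow-up id} is \emph{not} the tool the paper uses for the lower bound, contrary to your ``main obstacle'' paragraph. The paper's Step~1 proceeds by an energy-decomposition argument (taken from \cite{SunZhu20global}): split $\int_\Sigma|\nabla u_\varepsilon|^2$ over $B_{r_\varepsilon R}(p_\varepsilon)$, over $\Sigma\setminus B_\delta(p_\varepsilon)$, and over the neck $B_\delta\setminus B_{r_\varepsilon R}$; on the neck, pass to the radial average $u_\varepsilon^*$ and compare with the harmonic function sharing its boundary values. This yields the leading constant $-1-\ln\pi-(\ln h(p_0)+\tfrac12 A(p_0))$ directly, with no appeal to the $O(\lambda_\varepsilon e^{-\lambda_\varepsilon})$ identity. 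Your vaguer ``plug the profile in and expand'' description would require the sharp Chen--Lin pointwise estimates to be made rigorous; that also works, but it is a different route from the one the paper takes, and it is worth being aware that the cruder energy method suffices here.
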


Hence, we obtain a minimizing solution of the functional $J_{8\pi}$. 
 
\begin{theorem}\label{thm:DJLW}
Let $\Sigma$ be a compact Riemann surface and $K$ be its Gaussian curvature. Suppose $h$ is a smooth function which is positive somewhere on $\Sigma$. If the following holds at the maximum points of $2\ln h^++A$ 
\begin{equation*}
    \Delta \ln h+8\pi-2K>0,
\end{equation*}
then equation \eqref{eq:KW} has a minimizing solution for $\rho=8\pi$.
\end{theorem}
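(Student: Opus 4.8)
The plan is to produce the minimizing solution by the subcritical approximation of Ding--Jost--Li--Wang, using the structural hypothesis only to rule out concentration via the sharp energy expansion recorded in \autoref{functional value}. For each $\varepsilon\in(0,8\pi)$ the Moser--Trudinger inequality $\ln\int_{\Sigma}e^{u}\le\frac1{16\pi}\int_{\Sigma}\abs{\nabla u}^2+\int_{\Sigma}u+C$ shows that, on $\set{u\in H^1(\Sigma):\int_{\Sigma}u=0}$, one has $J_{8\pi-\varepsilon}(u)\ge\bigl(\frac1{2(8\pi-\varepsilon)}-\frac1{16\pi}\bigr)\int_{\Sigma}\abs{\nabla u}^2-C$ with strictly positive leading coefficient, so $J_{8\pi-\varepsilon}$ is coercive there; combined with weak lower semicontinuity of the Dirichlet energy and the compactness furnished by Moser--Trudinger, and using $\max_{\Sigma}h>0$ to keep $\int_{\Sigma}he^{u}$ away from $0$ along a minimizing sequence, $J_{8\pi-\varepsilon}$ attains its infimum. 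Since $J_{\rho}$ is invariant under adding constants we normalize the minimizer $u_{\varepsilon}$ so that $\int_{\Sigma}he^{u_{\varepsilon}}=1$; then $u_{\varepsilon}$ is a smooth solution of \eqref{blow-up sequence}. Because $\frac1{2(8\pi-\varepsilon)}>\frac1{16\pi}$, $J_{8\pi-\varepsilon}\ge J_{8\pi}$ pointwise, hence $\inf_{H^1}J_{8\pi}\le J_{8\pi-\varepsilon}(u_{\varepsilon})$, while testing $J_{8\pi-\varepsilon}$ against a fixed function gives $\limsup_{\varepsilon\searrow0}J_{8\pi-\varepsilon}(u_{\varepsilon})\le\inf_{H^1}J_{8\pi}<\infty$; in particular $\lim_{\varepsilon\searrow0}J_{8\pi-\varepsilon}(u_{\varepsilon})=\inf_{H^1}J_{8\pi}$, so \eqref{eq:upper-J} holds.

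Passing to a subsequence, either $\set{u_{\varepsilon}}$ is bounded in $H^1(\Sigma)$, or $\lambda_{\varepsilon}=\max_{\Sigma}u_{\varepsilon}\to\infty$, i.e.\ $u_{\varepsilon}$ is a blow-up sequence to which \autoref{blow-up id} and \autoref{functional value} apply. In the bounded case, take $u_{\varepsilon}\rightharpoonup u_0$ in $H^1$; then $\int_{\Sigma}\abs{\nabla u_0}^2\le\liminf\int_{\Sigma}\abs{\nabla u_{\varepsilon}}^2$, and $\int_{\Sigma}u_{\varepsilon}\to\int_{\Sigma}u_0$, $1=\int_{\Sigma}he^{u_{\varepsilon}}\to\int_{\Sigma}he^{u_0}$ by Rellich and Moser--Trudinger, whence
\begin{align*}
J_{8\pi}(u_0)\le\liminf_{\varepsilon\searrow0}J_{8\pi}(u_{\varepsilon})\le\liminf_{\varepsilon\searrow0}J_{8\pi-\varepsilon}(u_{\varepsilon})=\inf_{H^1}J_{8\pi}.
\end{align*}
Thus $u_0$ minimizes $J_{8\pi}$, and by its Euler--Lagrange equation and elliptic regularity it is a smooth solution of \eqref{eq:KW} with $\rho=8\pi$. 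This settles the theorem unless every subsequence of $\set{u_{\varepsilon}}$ blows up.

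Assume then that $u_{\varepsilon}$ blows up. By \autoref{functional value} the blow-up point $p_0$ is a maximum point of $2\ln h^++A$, $\inf_{H^1}J_{8\pi}=-1-\ln\pi-\bigl(\ln h(p_0)+\frac12A(p_0)\bigr)$, and there exists $\phi_{\varepsilon}\in H^1(\Sigma)$ with
\begin{align*}
J_{8\pi}(\phi_{\varepsilon})=\inf_{H^1}J_{8\pi}-\tfrac14\bigl(\Delta\ln h(p_0)+8\pi-2K(p_0)\bigr)\varepsilon\ln\varepsilon^{-1}+o(\varepsilon\ln\varepsilon^{-1}).
\end{align*}
Since $p_0$ maximizes $2\ln h^++A$, the hypothesis of the theorem gives $\Delta\ln h(p_0)+8\pi-2K(p_0)>0$, and since $\varepsilon\ln\varepsilon^{-1}>0$ for small $\varepsilon$ we conclude $J_{8\pi}(\phi_{\varepsilon})<\inf_{H^1}J_{8\pi}$ for such $\varepsilon$, which is absurd. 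Hence $\set{u_{\varepsilon}}$ cannot blow up, the preceding paragraph applies, and $J_{8\pi}$ has a minimizer, i.e.\ a minimizing solution of \eqref{eq:KW} with $\rho=8\pi$.

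The substantive analysis --- the precise blow-up identity and the matching two-sided estimate comparing $\inf_{H^1}J_{8\pi}$ with the energy of the bubbling test configuration $\phi_{\varepsilon}$ --- has already been carried out in \autoref{blow-up id} and \autoref{functional value}, so what remains above is essentially bookkeeping. The only points still demanding attention are (i) verifying that the subcritical minimizers satisfy \eqref{eq:upper-J}, so that those theorems apply to them, and (ii) the compactness step upgrading weak $H^1$-convergence of $\set{u_{\varepsilon}}$ to convergence of $\int_{\Sigma}he^{u_{\varepsilon}}$, where the strict subcriticality $8\pi-\varepsilon<8\pi$ and the Moser--Trudinger inequality are essential.
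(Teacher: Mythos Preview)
Your proof is correct and follows essentially the same route as the paper: the paper does not give a separate proof of \autoref{thm:DJLW}, but derives it implicitly from the preliminary setup in Section~2 (subcritical minimizers $u_\varepsilon$ with \eqref{eq:upper-J}) together with \autoref{functional value}, exactly as you spell out. The only cosmetic difference is that in the non-blow-up case the paper invokes elliptic regularity to get smooth convergence from $\limsup_\varepsilon\max_\Sigma u_\varepsilon<\infty$, whereas you pass through an $H^1$ weak limit and Moser--Trudinger compactness; both are standard and lead to the same conclusion.
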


\begin{rem}The condition mentioned in \autoref{thm:DJLW} can not hold on $2$-sphere with arbitrary metric. 
Assume $g=e^{2\phi}g_0$ and solve
\begin{align*}
    -\Delta_{g_0}\psi=\dfrac{1}{\abs{\Sigma}_{g_0}}-\dfrac{e^{2\phi}}{\abs{\Sigma}_{g}},\quad \int_{\Sigma}\psi\dif\mu_{g_0}=0,
\end{align*}
where $\abs{\Sigma}_g$ stands for the area of $\Sigma$ with respect to the metric $g$.
Set $h_0=he^{2\phi+\rho \psi}$. Then
\begin{align*}
    J_{\rho,h,g}(u)=J_{\rho,h_0,g_0}\left(u-\rho \psi\right)-\dfrac{\rho}{2}\int_{\Sigma}\abs{\dif \psi}_{g_0}^2\dif\mu_{g_0}.
\end{align*}
If the condition mentioned in \autoref{thm:DJLW} holds, then there is a minimizer of $J_{8\pi,h,g}$. Hence, there is also a minimizer of $J_{8\pi, h_0,g_0}$. If $\Sigma$ is a $2$-sphere, we choose $g_0$ such that the Gaussian curvature is constant, then $h_0$ must be a constant (see \cite{Han90prescribing}). Thus $h$ is a positive function and
\begin{align*}
    \Delta_{g}\ln h+\dfrac{8\pi}{\abs{\Sigma}_{g}}-2K_{g}=e^{-2\phi}\left(\Delta_{g_0}\ln h_0+\dfrac{8\pi}{\abs{\Sigma}_{g_0}}-2K_{g_0}\right)=0
\end{align*}
which is a contradiction. 
\end{rem}

\begin{rem}Zhu \cite{Zhu18generalized} also obtained the infimum of the functional $J_{8\pi}$ if there is no minimzer (when $h$ is non-negative). He pointed out the blow-up point must be the positive point of $h$ and used the maximum principle to estimate the lower bound of the functional $J_{8\pi}$ when $h$ is non-negative. In our case, the maximum principle does not work since $h$ is sign-changed. We will use the method of energy estimate to give the lower bound of the functional $J_{8\pi}$. Such a method also can be used to consider the flow case (cf. \cite{SunZhu20global,LiZhu19convergence}) and the Palais-Smale sequence.
\end{rem}

\begin{rem}
The method in the proof of \autoref{thm:DJLW} can be used to prove the convergence of the Kazdan-Warner flow. In other words, under the same condition mentioned in \autoref{thm:DJLW}, there exists an initial date $u_0$ such that the following flow
\begin{align*}
    \dfrac{\partial u}{\partial t}=\Delta u+8\pi\left(\dfrac{he^{u}}{\int_{\Sigma}he^{u}}-1\right),\quad u(0)=u_0
\end{align*}
converges to a minimizer of $J_{8\pi}$. This gives a generalization of the  previous results \cite{LiZhu19convergence} (positive prescribed function case) and \cite{SunZhu20global} (non-negative prescribed function case). Rencently, Chen, Li, Li and Xu \cite{CheLiLiXu20gaussian} consider another flow approach to the Gaussian curvature flow on sphere and reproved the existence result for sign-changing prescribed function which was obtained by Han \cite{Han90prescribing}.
\end{rem}

\section{Preliminary}

Recall the strong Trudinger-Moser inequality (cf. \cite[Theorem 1.7]{Fon93sharp})
\begin{align*}
    \sup_{u\in H^1\left(\Sigma\right), \int_{\Sigma}\abs{\nabla u}^2\leq 1, \int_{\Sigma}u=0}\int_{\Sigma}\exp\left(4\pi u^2\right)<\infty.
\end{align*}
which implies the Trudinger-Moser inequality
\begin{align}\label{eq:TM}
    \ln\int_{\Sigma}e^{u}\leq\dfrac{1}{16\pi}\int_{\Sigma}\abs{\nabla u}^2+\int_{\Sigma}u+c
\end{align}
where $c$ is a uniform constant depends only the geometry of $\Sigma$. 

We may assume $h$ is positive somewhere. If $0<\rho<8\pi$, then applying the Trudinger-Moser inequality \eqref{eq:TM} Kazdan and Warner (\cite[Theorem 7.2]{kazdan1974curvature}) proved that the Kazdan-Warner equation \eqref{eq:KW} admits a solution $u$ which minimizes the functional $J_{\rho}$ and satisfies
\begin{align*}
    \int_{\Sigma}he^{u}=1.
\end{align*}

We consider the critical case $\rho=8\pi$. For every $\varepsilon\in(0,8\pi)$, let $u_{\varepsilon}$ be a minimizer of $J_{8\pi-\varepsilon}$ which satisfies
\begin{align*}
    \int_{\Sigma}he^{u_{\varepsilon}}=1.
\end{align*}
Thus $u_{\varepsilon}$ satisfies \eqref{blow-up sequence}.
It is clear that the function
\begin{align*}
    \rho\mapsto\inf_{u\in H^1\left(\Sigma\right)}J_{\rho}(u)
\end{align*}
is a decreasing function on $(0,+\infty)$. In particular, $u_{\varepsilon}$ satisfies \eqref{eq:upper-J}. By the Trudinger-Moser inequality \eqref{eq:TM}, we have
\begin{align}\label{eq:lower-J}
    J_{8\pi-\varepsilon}\left(u_{\varepsilon}\right)\geq&\ln\int_{\Sigma}e^{u_{\varepsilon}}-c.
\end{align}
Thus \eqref{eq:upper-J} and \eqref{eq:lower-J} gives
\begin{align}\label{eq:upper-energy}
    \int_{\Sigma}e^{u_{\varepsilon}}\leq C,\quad\forall \varepsilon\in(0,4\pi).
\end{align}
One can check that
\begin{align*}
    \lim_{\varepsilon\to0}J_{8\pi}\left(u_{\varepsilon}\right)=\inf_{u\in H^1\left(\Sigma\right)}J_{8\pi}(u).
\end{align*}
If 
\begin{align*}
    \limsup_{\varepsilon\to0}\max_{\Sigma}u_{\varepsilon}<+\infty,
\end{align*}
then up to a subsequence $u_{\varepsilon}$ converges smoothly to a minimizer of $J_{8\pi}$.

In the rest of this section, we only assume $u_{\varepsilon}$ is a solution to \eqref{blow-up sequence} and satisfies the condition \eqref{eq:upper-energy}. 

Assume now $\set{u_{\varepsilon}}$ is a blow-up sequence, i.e.,
\begin{align*}
    \limsup_{\varepsilon\to0}\max_{\Sigma}u_{\varepsilon}=+\infty.
\end{align*}
Without loss of generality, we may assume $h^{\pm}e^{u_{\varepsilon}}\dif\mu_{\Sigma}$ converges to a nonzero Radon measure $\mu^{\pm}$ as $\varepsilon\to0$.
Define the singular set $S$ of the sequence $\set{u_{\varepsilon}}$ by
\begin{align*}
    S=\set{x\in\Sigma: \abs{\mu}\left(\set{x}\right)\geq \dfrac12},
\end{align*}
where $\abs{\mu}=\mu^++\mu^-$.
It is clear that $S$ is a finite set. Applying Brezis-Merle’s estimate \cite[Theorem 1]{BreMer91uniform}, one can obtain that for each compact subset $K\subset \Sigma\setminus S$ (cf. \cite[Lemma 2.8]{DinJosLiWan97differential})
\begin{align}\label{eq:out}
    \norm{u_{\varepsilon}-\int_{\Sigma} u_{\varepsilon}}_{L^{\infty}\left(K\right)}\leq C_{K}.
\end{align}
Then one obtain a characterization of $S$ by the blow-up sets of $\set{u_{\varepsilon}}$ (cf. \cite[Page 1240]{BreMer91uniform})
\begin{align*}
    S=\set{p\in\Sigma: \exists \ p_\varepsilon\in\Sigma,\ s.t.\ \lim_{\varepsilon\to0}p_\varepsilon=p,\   \lim_{\varepsilon\to0}u_{\varepsilon}\left(p_\varepsilon\right)=\infty.}
\end{align*}
Moreover, $S$ is nonempty and 
\begin{align*}
    \lim_{\varepsilon\to0}\int_{\Sigma} u_{\varepsilon}=-\infty,
\end{align*}
which implies that $u_{\varepsilon}$ goes to $-\infty$ uniformly on each compact subsets $K\subset\Sigma\setminus S$. Thus, $\abs{\mu}$ is a Dirac measure. 
By using blow-up analysis (cf. \cite[Lemma 1]{LiSha94blowup}) together with the classification result of Chen-Li \cite[Theorem 1]{CheLi91classification}, one can show that $\mu^{-}=0$ and
\begin{align*}
    S=\set{p\in\Sigma : \mu^+\left(\set{p}\right)\geq 1, h(p)>0.}
\end{align*}
Notice that $he^{u_{\varepsilon}}\dif\mu_{\Sigma}$ converges to the nonzero Radon measure $\mu^+$ as $\varepsilon\to0$. We conclude that $S=\set{p_0}$ is a single point set and $\abs{\mu}=\mu^+=\delta_{x_0}$. Thus
\begin{lem}[cf. Lemma 2.6 in \cite{DinJosLiWan97differential}]\label{convergence}
$u_\varepsilon-\int_{\Sigma}u_\varepsilon$ converges to $8\pi G(\cdot,p_0)$ weakly in $W^{1,q}\left(\Sigma\right)$ and strongly in $L^q\left(\Sigma\right)$ for every $q\in(1,2)$, and converges in $C^2_{loc}\left(\Sigma\setminus\set{p_0}\right)$.
\end{lem}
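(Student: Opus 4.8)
\emph{Setup and uniform Sobolev bounds.} The plan is to normalize and set $v_\varepsilon := u_\varepsilon - \int_\Sigma u_\varepsilon$, so that $\int_\Sigma v_\varepsilon = 0$ and $-\Delta v_\varepsilon = f_\varepsilon$ with $f_\varepsilon := (8\pi-\varepsilon)(he^{u_\varepsilon}-1)$, $\int_\Sigma f_\varepsilon = 0$. The energy bound \eqref{eq:upper-energy} gives $\norm{f_\varepsilon}_{L^1(\Sigma)} \leq 8\pi(\norm{h}_{L^\infty}\int_\Sigma e^{u_\varepsilon}+1) \leq C$. I would then invoke the standard linear estimate that on a closed surface the Green operator sends $L^1$ boundedly into $W^{1,q}$ for every $q\in[1,2)$: writing $v_\varepsilon(x) = \int_\Sigma G(x,y)f_\varepsilon(y)\dif\mu(y)$ and using $\abs{\nabla_x G(x,y)}\leq C\abs{x-y}^{-1}$ in normal coordinates (a weak-$L^2$ kernel), Young's inequality in Lorentz spaces yields $\norm{v_\varepsilon}_{W^{1,q}(\Sigma)}\leq C_q\norm{f_\varepsilon}_{L^1(\Sigma)}\leq C_q$. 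By reflexivity, Rellich compactness and a diagonal argument over $q\nearrow 2$, along a subsequence $v_\varepsilon\rightharpoonup v$ weakly in $W^{1,q}(\Sigma)$ and strongly in $L^q(\Sigma)$ (hence a.e.) for every $q\in(1,2)$, with $\int_\Sigma v = 0$.

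\emph{Identification of the limit.} For each $\phi\in C^\infty(\Sigma)$, I would test the equation against $\phi$, obtaining $\int_\Sigma\nabla v_\varepsilon\cdot\nabla\phi = (8\pi-\varepsilon)(\int_\Sigma he^{u_\varepsilon}\phi - \int_\Sigma\phi)$. The left-hand side converges to $\int_\Sigma\nabla v\cdot\nabla\phi$ since $\nabla\phi\in L^{q'}$ with $q'>2$, while the right-hand side converges to $8\pi(\phi(p_0)-\int_\Sigma\phi)$ because $he^{u_\varepsilon}\dif\mu_\Sigma\rightharpoonup\delta_{p_0}$ (established above). Hence $-\Delta v = 8\pi(\delta_{p_0}-1)$ in $\mathcal{D}'(\Sigma)$, and together with $\int_\Sigma v = 0$ the defining properties of the Green function force $v = 8\pi G(\cdot,p_0)$. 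Since the limit is independent of the subsequence, the whole family converges, giving the weak $W^{1,q}$ and strong $L^q$ statements.

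\emph{Local $C^2$ convergence away from $p_0$.} Here the key input is \eqref{eq:out}, which already provides $\norm{v_\varepsilon}_{L^\infty(K)}\leq C_K$ for every compact $K\subset\Sigma\setminus\set{p_0}$. Writing $he^{u_\varepsilon} = (he^{v_\varepsilon})e^{\int_\Sigma u_\varepsilon}$ and recalling $\int_\Sigma u_\varepsilon\to-\infty$, one gets $he^{u_\varepsilon}\to 0$ in $L^\infty_{loc}(\Sigma\setminus\set{p_0})$, so $f_\varepsilon\to-8\pi$ there. Interior $L^p$ estimates for $-\Delta v_\varepsilon = f_\varepsilon$ then bound $v_\varepsilon$ in $W^{2,p}_{loc}$, hence in $C^{1,\alpha}_{loc}$; this upgrades $he^{v_\varepsilon}\to 0$, and thus $f_\varepsilon\to-8\pi$, to $C^{0,\alpha}_{loc}$ convergence, after which interior Schauder estimates bound $v_\varepsilon$ in $C^{2,\alpha}_{loc}(\Sigma\setminus\set{p_0})$. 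Arzel\`a--Ascoli gives $C^2_{loc}$ convergence along a subsequence, and the limit must coincide with the a.e.\ limit $8\pi G(\cdot,p_0)$ from the previous step, so the full family converges.

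\emph{Main obstacle.} The crux is extracting the uniform $W^{1,q}$ bound from right-hand sides that are merely bounded in $L^1$ (the Lorentz-space mapping property of the Green operator), and correctly pairing the weak convergence of the measures $he^{u_\varepsilon}\dif\mu_\Sigma$ with the weak $W^{1,q}$ convergence of $v_\varepsilon$ when passing to the limit in the weak formulation; once \eqref{eq:out} is in hand, the remaining elliptic bootstrap away from $p_0$ is routine.
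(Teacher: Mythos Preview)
Your argument is correct and follows the standard route; the paper itself does not give an independent proof but simply cites \cite[Lemma~2.6]{DinJosLiWan97differential}, and your outline is precisely the argument one finds there: uniform $L^1$ control on the right-hand side via \eqref{eq:upper-energy}, the $L^1\to W^{1,q}$ mapping property of the Green operator for $q<2$, identification of the limit through the weak formulation and the measure convergence $he^{u_\varepsilon}\dif\mu_\Sigma\rightharpoonup\delta_{p_0}$, and finally elliptic bootstrap away from $p_0$ using \eqref{eq:out} and $\int_\Sigma u_\varepsilon\to-\infty$. One small slip: in the bootstrap step you write ``upgrades $he^{v_\varepsilon}\to 0$'', but $he^{v_\varepsilon}$ does not tend to zero (it is merely bounded); what goes to zero in $C^{0,\alpha}_{loc}$ is $he^{u_\varepsilon}=he^{v_\varepsilon}e^{\int_\Sigma u_\varepsilon}$, since the bounded $C^{0,\alpha}$ factor is multiplied by the scalar $e^{\int_\Sigma u_\varepsilon}\to 0$.
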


For a fixed small $\delta_0>0$ and $u_{\varepsilon}$ of $J_{8\pi}$, we define $\rho_\varepsilon$ to be 
\begin{equation*}
    \rho_\varepsilon=(8\pi-\varepsilon)\int_{B_{\delta_0}(p_0)}he^{u_\varepsilon}
\end{equation*}
and 
\begin{align*}
    \lambda_{\varepsilon}=u_{\varepsilon}(p_\varepsilon)=\max_{\overline{B_{\delta}(p_0)}}u_{\varepsilon}\to +\infty.
\end{align*}
We may assume
\begin{align*}
    h\vert_{B_{\delta_0}(p_0)}\geq\dfrac12h(p_0)>0,\quad \max_{\partial B_{\delta_0}(p_0)}u_{\varepsilon}-\min_{\partial B_{\delta_0}(p_0)}u_{\varepsilon}\leq C,\quad\int_{B_{\delta_0}(p_0)}e^{u_{\varepsilon}}\leq C.
\end{align*}
Li \cite[Theorem 0.3]{Li99harnack} obtained the following local estimate
\begin{equation}\label{eq:local}
\abs{u_\varepsilon(p)-\ln{\frac{e^{\lambda_\varepsilon}}{1+\frac{(8\pi-\varepsilon)h_{p_\varepsilon}}{8}e^{\lambda_\varepsilon}|p-p_\varepsilon|^2}}}\leq C
\end{equation}
for $p\in B_{\delta_0}(p_0)$, where $|p-p_\varepsilon
|$ stands for the distance between $p$ and $p_\varepsilon$. Together with \autoref{convergence}, the above local estimate \eqref{eq:local} gives the following
\begin{lem}[cf. Corollary 2.4 in \cite{CheLin02sharp}]\label{outside}
There exists a constant $C>0$ such that
\begin{equation*}
    \abs{u_\varepsilon+\lambda_\varepsilon}\leq C \ \text{in} \ \Sigma\setminus B_{\delta_0}(p_0).
\end{equation*}
\end{lem}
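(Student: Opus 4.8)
The plan is to glue the interior ``bubble'' description of $u_\varepsilon$ supplied by the local estimate \eqref{eq:local} to the exterior control supplied by \eqref{eq:out} (equivalently, by the $C^2_{loc}$ convergence in \autoref{convergence} together with the smoothness of $G(\cdot,p_0)$ on $\Sigma\setminus\set{p_0}$), matching the two pieces along a fixed geodesic sphere centred at $p_0$ of some radius $r\in(0,\delta_0)$. The reason both ingredients are needed is that \eqref{eq:out} only tells us that $u_\varepsilon-\int_\Sigma u_\varepsilon$ is bounded away from $p_0$, without fixing $\int_\Sigma u_\varepsilon$ relative to $\lambda_\varepsilon$; that missing relation is precisely what \eqref{eq:local} yields once it is evaluated at points at distance $\sim r$ from $p_\varepsilon$.

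First I would fix $r=\delta_0/2$. Since $p_\varepsilon\to p_0$, for all sufficiently small $\varepsilon$ one has $\delta_0/4\le\abs{p-p_\varepsilon}\le\delta_0$ for every $p\in\partial B_r(p_0)$. Inserting such a $p$ into \eqref{eq:local} and using that $\lambda_\varepsilon\to+\infty$ while $h(p_\varepsilon)\to h(p_0)>0$ — so $e^{\lambda_\varepsilon}\abs{p-p_\varepsilon}^2\to+\infty$ — the logarithmic profile on the right-hand side of \eqref{eq:local} evaluates to $-\lambda_\varepsilon+O(1)$, uniformly on $\partial B_r(p_0)$; hence $\abs{u_\varepsilon+\lambda_\varepsilon}\le C$ there. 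Next I would apply \eqref{eq:out} with the compact set $K=\Sigma\setminus B_{r/2}(p_0)$ (admissible since $S=\set{p_0}$ at this stage) to get $\abs{u_\varepsilon-\int_\Sigma u_\varepsilon}\le C$ on $K$. Restricting this to $\partial B_r(p_0)\subset K$ and comparing with the bound of the previous step gives $\abs{\int_\Sigma u_\varepsilon+\lambda_\varepsilon}\le C$. Finally, for any $p\in\Sigma\setminus B_{\delta_0}(p_0)\subset K$ the triangle inequality gives
\[
\abs{u_\varepsilon(p)+\lambda_\varepsilon}\le\abs{u_\varepsilon(p)-\int_\Sigma u_\varepsilon}+\abs{\int_\Sigma u_\varepsilon+\lambda_\varepsilon}\le C,
\]
which is the assertion.

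Since this is just a two-region patching of results already in hand, I do not anticipate any real analytic obstacle; the only thing that requires attention is the bookkeeping of radii. The estimate \eqref{eq:local} lives on $B_{\delta_0}(p_0)$ while \eqref{eq:out} needs a compact subset of $\Sigma\setminus\set{p_0}$, so the matching sphere must sit strictly inside the annulus between them (any fixed $r\in(0,\delta_0)$ works, e.g. $r=\delta_0/2$), and one must check that $\abs{p-p_\varepsilon}$ remains bounded away from $0$ and from $\infty$ on that sphere — which is automatic once $p_\varepsilon$ is close enough to $p_0$ — and that $h(p_0)>0$ (already established) keeps the coefficient in \eqref{eq:local} between two positive constants.
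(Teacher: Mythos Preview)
Your argument is correct and is exactly the approach the paper indicates: it states the lemma as a consequence of combining the $C^2_{loc}$ convergence of \autoref{convergence} (equivalently, the bound \eqref{eq:out} away from $p_0$) with Li's local estimate \eqref{eq:local}, and your matching on the intermediate sphere $\partial B_{\delta_0/2}(p_0)$ is precisely how one extracts $\abs{\int_\Sigma u_\varepsilon+\lambda_\varepsilon}\le C$ from these two inputs.
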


\begin{lem}[cf. Estimate A in \cite{CheLin02sharp}]\label{estimate B}
Set $w_\varepsilon$ to be the error term defined by
\begin{equation*}
    \omega_\varepsilon(q)=u_\varepsilon(q)-\rho_\varepsilon G(q,p_\varepsilon)-\bar{u}_\varepsilon
\end{equation*}
on $\Sigma\setminus B_{\delta_0/2}(p_0)$. Then we have
\begin{equation*}
    \norm{\omega_\varepsilon}_{C^1\left(\Sigma\setminus B_{\delta_0}(p_0)\right)}=O\left(e^{-\lambda_\varepsilon/2}\right).
\end{equation*}
\end{lem}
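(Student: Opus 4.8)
The plan is to follow the strategy of Chen--Lin: represent $u_\varepsilon-\bar u_\varepsilon$ through the Green function and exploit the near radial symmetry of the concentrating bubble. Integrating \eqref{blow-up sequence} over $\Sigma$ forces $\int_\Sigma he^{u_\varepsilon}=1$, so $u_\varepsilon-\bar u_\varepsilon$ has zero mean and satisfies $-\Delta(u_\varepsilon-\bar u_\varepsilon)=(8\pi-\varepsilon)(he^{u_\varepsilon}-1)$, whose right-hand side also has zero average. Using $\Delta G(\cdot,p)=1-\delta_p$ together with the symmetry and normalization of $G$, this gives
\begin{equation*}
u_\varepsilon(x)-\bar u_\varepsilon=(8\pi-\varepsilon)\int_\Sigma G(x,q)\,h(q)e^{u_\varepsilon(q)}\,d\mu(q),\qquad x\in\Sigma .
\end{equation*}
Since $\rho_\varepsilon G(x,p_\varepsilon)=(8\pi-\varepsilon)\bigl(\int_{B_{\delta_0}(p_0)}he^{u_\varepsilon}\bigr)G(x,p_\varepsilon)$, subtracting it and splitting $\Sigma$ into $B_{\delta_0}(p_0)$ and its complement yields, for $x\in\Sigma\setminus B_{\delta_0}(p_0)$,
\begin{equation*}
\begin{split}
\omega_\varepsilon(x)&=(8\pi-\varepsilon)\int_{B_{\delta_0}(p_0)}\bigl[G(x,q)-G(x,p_\varepsilon)\bigr]h(q)e^{u_\varepsilon(q)}\,d\mu(q)\\
&\quad+(8\pi-\varepsilon)\int_{\Sigma\setminus B_{\delta_0}(p_0)}G(x,q)\,h(q)e^{u_\varepsilon(q)}\,d\mu(q).
\end{split}
\end{equation*}

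I would then bound the two terms, together with their $x$-derivatives for the $C^1$ statement. The second term is $O(e^{-\lambda_\varepsilon})$: by \autoref{outside} one has $e^{u_\varepsilon}=O(e^{-\lambda_\varepsilon})$ on $\Sigma\setminus B_{\delta_0}(p_0)$, while $\sup_{x}\int_\Sigma(\abs{G(x,q)}+\abs{\nabla_xG(x,q)})\,d\mu(q)<\infty$ because the logarithmic singularity of $G$ and the $\abs{x-q}^{-1}$ singularity of $\nabla_xG$ are integrable in dimension two. For the first term, any $x\in\Sigma\setminus B_{\delta_0}(p_0)$ is at distance $\ge\delta_0/2$ from $\overline{B_{\delta_0/2}(p_0)}$, so $q\mapsto G(x,q)$ and $q\mapsto\nabla_xG(x,q)$ are Lipschitz on $B_{\delta_0/2}(p_0)$ with constants independent of $x$, giving $\abs{G(x,q)-G(x,p_\varepsilon)}+\abs{\nabla_x(G(x,q)-G(x,p_\varepsilon))}\le C\abs{q-p_\varepsilon}$ there; on the annulus $B_{\delta_0}(p_0)\setminus B_{\delta_0/2}(p_0)$ one has $\abs{q-p_\varepsilon}\ge\delta_0/4$ for small $\varepsilon$, so \eqref{eq:local}, in the form $e^{u_\varepsilon(q)}\le Ce^{\lambda_\varepsilon}(1+ce^{\lambda_\varepsilon}\abs{q-p_\varepsilon}^2)^{-2}$, gives $e^{u_\varepsilon}=O(e^{-\lambda_\varepsilon})$ and an $O(e^{-\lambda_\varepsilon})$ contribution. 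Hence the first term is bounded in $C^1$ by $C\int_{B_{\delta_0/2}(p_0)}\abs{q-p_\varepsilon}\,h(q)e^{u_\varepsilon(q)}\,d\mu(q)+O(e^{-\lambda_\varepsilon})$. Inserting \eqref{eq:local} once more and rescaling $q=p_\varepsilon+e^{-\lambda_\varepsilon/2}y$ in normal coordinates centred at $p_\varepsilon$ turns this first moment into $e^{-\lambda_\varepsilon/2}\int_{\abs{y}\le Re^{\lambda_\varepsilon/2}}C\abs{y}(1+c\abs{y}^2)^{-2}\,dy+o(e^{-\lambda_\varepsilon/2})$, which is $O(e^{-\lambda_\varepsilon/2})$ since $\int_{\mathbb{R}^2}\abs{y}(1+\abs{y}^2)^{-2}\,dy<\infty$. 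Collecting everything gives $\norm{\omega_\varepsilon}_{C^1(\Sigma\setminus B_{\delta_0}(p_0))}=O(e^{-\lambda_\varepsilon/2})$.

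The step that carries the load is the last moment estimate: the exponent $1/2$ in the conclusion is exactly the order of the first moment of the bubble, and the computation closes only because the profile in \eqref{eq:local} decays like $\abs{q-p_\varepsilon}^{-4}$ at the concentration scale $e^{-\lambda_\varepsilon/2}$, which makes the rescaled integral $\int\abs{y}(1+\abs{y}^2)^{-2}\,dy$ convergent; a slower decay would leave an $O(1)$ remainder and lose the rate. A minor technical subtlety is that the Lipschitz constants of $G$ and $\nabla_xG$ on $B_{\delta_0/2}(p_0)$ blow up as $x\to\partial B_{\delta_0/2}(p_0)$, which is why the conclusion is stated on $\Sigma\setminus B_{\delta_0}(p_0)$ rather than on the whole domain $\Sigma\setminus B_{\delta_0/2}(p_0)$ of $\omega_\varepsilon$; to reach the latter one would instead apply $L^p$ and Schauder estimates to $-\Delta\omega_\varepsilon=(8\pi-\varepsilon)he^{u_\varepsilon}-(8\pi-\varepsilon)\int_{\Sigma\setminus B_{\delta_0}(p_0)}he^{u_\varepsilon}$ on the annulus, using the bound just obtained on $\partial B_{\delta_0}(p_0)$ as boundary data.
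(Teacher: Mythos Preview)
Your proof is correct and follows essentially the same route as the paper: the Green representation of $u_\varepsilon-\bar u_\varepsilon$, the near/far splitting, \autoref{outside} for the exterior contribution, and Li's local estimate \eqref{eq:local} for the first-moment bound $\int\abs{q-p_\varepsilon}\,he^{u_\varepsilon}=O(e^{-\lambda_\varepsilon/2})$. The only minor difference is that the paper first obtains the $C^0$ bound from the representation and then upgrades to $C^1$ by computing $-\Delta\omega_\varepsilon=O(e^{-\lambda_\varepsilon})$ on $\Sigma\setminus B_{\delta_0}(p_0)$ and applying standard elliptic estimates with that $C^0$ bound as boundary data, whereas you differentiate the Green representation directly; both are equally valid here.
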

\begin{proof}
Notice that $h$ maybe non-positive outside of $B_{\delta_0/2}(p_0)$ and in this case we also have the above estimate. We list a proof here. By Green representation formula, for every $q\in\Sigma\setminus B_{\delta_0}(p_0)$
\begin{align*}
    u_{\varepsilon}(q)-\bar u_{\varepsilon}=&\left(8\pi-\varepsilon\right)\int_{\Sigma}G(q,p)\left[h(p)e^{u_{\varepsilon}(p)}-1\right]\dif\mu_{\Sigma}(p)\\
    =&\left(8\pi-\varepsilon\right)\int_{\Sigma}\left(G(q,p)-G\left(q,p_{\varepsilon}\right)\right)\left[h(p)e^{u_{\varepsilon}(p)}-1\right]\dif\mu_{\Sigma}(p)\\
    =&\left(8\pi-\varepsilon\right)\int_{\Sigma\setminus B_{\delta_0/2}(p_0)}\left(G(q,p)-G\left(q,p_{\varepsilon}\right)\right)h(p)e^{u_{\varepsilon}(p)}\dif\mu_{\Sigma}(p)\\
    &+\left(8\pi-\varepsilon\right)\int_{ B_{\delta_0/2}(p_0)}\left(G(q,p)-G\left(q,p_{\varepsilon}\right)\right)h(p)e^{u_{\varepsilon}(p)}\dif\mu_{\Sigma}(p)+(8\pi-\varepsilon)G(q,p_{\varepsilon})\\
    =&(8\pi-\varepsilon)G(q,p_{\varepsilon})+O\left(e^{-\lambda_{\varepsilon}/2}\right).
\end{align*}
Here we used estimate \eqref{eq:out} and Li's local estimate \eqref{eq:local}.
By definition,
\begin{align*}
    \rho_{\varepsilon}=(8\pi-\varepsilon)-(8\pi-\varepsilon)\int_{\Sigma\setminus B_{\delta_0}(p_0)}he^{u_{\varepsilon}}=(8\pi-\varepsilon)+O\left(e^{-\lambda_{\varepsilon}}\right).
\end{align*}
Thus
\begin{align*}
    u_{\varepsilon}(q)-\bar u_{\varepsilon}-\rho_{\varepsilon}G(q,p_{\varepsilon})=O\left(e^{-\lambda_{\varepsilon}/2}\right),\quad\forall q\in \Sigma\setminus B_{\delta_0}(p_0).
\end{align*}
Notice that
\begin{align*}
    -\Delta\left(u_{\varepsilon}-\bar u_{\varepsilon}-\rho_{\varepsilon}G(\cdot,p_{\varepsilon})\right)=(8\pi-\varepsilon)he^{u_{\varepsilon}}+\rho_{\varepsilon}-(8\pi-\varepsilon)=O\left(e^{-\lambda_{\varepsilon}}\right),\quad\text{in}\ \Sigma\setminus B_{\delta_0}(p_0)
\end{align*}
and
\begin{align*}
    u_{\varepsilon}-\bar u_{\varepsilon}-\rho_{\varepsilon}G(\cdot,p_{\varepsilon})=O\left(e^{-\lambda_{\varepsilon}/2}\right),\quad\text{on}\ \partial B_{\delta_0}(p_0).
\end{align*}
The standard elliptic estimate gives
\begin{align*} 
    \norm{u_{\varepsilon}-\bar u_{\varepsilon}-\rho_{\varepsilon}G(\cdot,p_{\varepsilon})}_{C^1\left(\Sigma\setminus B_{\delta_0}(p_0)\right)}=O\left(e^{-\lambda_{\varepsilon}/2}\right).
\end{align*}
\end{proof}

Based on these facts, we then have the following local estimates. The proofs are same as those in \cite{CheLin02sharp}, so we omit them here.

\begin{lem}[cf. Estimate B in \cite{CheLin02sharp}]\label{cp position}
By using the local normal coordinate $x$ centering at $p_{\varepsilon}$, we set the regular part of Green function $G(x,p_\varepsilon)$ to be
\begin{equation*}
    \tilde{G}_{\varepsilon}(x)=G(x,p_\varepsilon)+\dfrac{1}{2\pi}\ln\abs{x},
\end{equation*}
and set
\begin{equation*}
    G_{\varepsilon}^*(x)=\rho_\varepsilon\tilde{G}_\varepsilon(x).
\end{equation*}
Then we get
\begin{equation*}
    \abs{\nabla\left(\ln h^++G_{\varepsilon}^*\right)(p_{\varepsilon})}=O\left(e^{-\lambda_\varepsilon/2}\right).
\end{equation*}
\end{lem}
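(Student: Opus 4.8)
\emph{Reduction and main idea.} Since $h>0$ on $B_{\delta_0}(p_0)$ and $p_\varepsilon\in B_{\delta_0}(p_0)$ for $\varepsilon$ small, near $p_\varepsilon$ one has $h^+=h$, so it suffices to bound $\abs{\nabla(\ln h+G^*_\varepsilon)(p_\varepsilon)}$. The starting observation is that $p_\varepsilon$ is the maximum point of $u_\varepsilon$, hence $\nabla u_\varepsilon(p_\varepsilon)=0$. I would feed this into the Green representation formula: differentiating
\[
u_\varepsilon(q)-\bar u_\varepsilon=(8\pi-\varepsilon)\int_\Sigma G(q,y)\big[h(y)e^{u_\varepsilon(y)}-1\big]\dif\mu_\Sigma(y)
\]
at $q=p_\varepsilon$, and using that $\int_\Sigma G(q,y)\dif\mu_\Sigma(y)\equiv 0$ (symmetry of $G$ and the normalization) kills the constant term, so that
\[
0=(8\pi-\varepsilon)\int_\Sigma\nabla_q G(q,y)\big|_{q=p_\varepsilon}\,h(y)e^{u_\varepsilon(y)}\dif\mu_\Sigma(y).
\]
Split $\int_\Sigma=\int_{\Sigma\setminus B_\delta(p_\varepsilon)}+\int_{B_\delta(p_\varepsilon)}$ for a small fixed $\delta$ with $B_\delta(p_\varepsilon)\subset B_{\delta_0}(p_0)$, and on the near piece split the kernel, written in the normal chart $x$ at $p_\varepsilon$ (so $d_g(p_\varepsilon,y)=\abs{x}$), as $\nabla_q G(q,y)|_{q=p_\varepsilon}=\frac{x}{2\pi\abs{x}^2}+\nabla_qR(q,y)|_{q=p_\varepsilon}$, where $R$ is the regular part; note $\tilde G_\varepsilon(x)=R(x,p_\varepsilon)$, so the regular part of the kernel evaluated at $y=p_\varepsilon$ is $\nabla\tilde G_\varepsilon(p_\varepsilon)$.

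\emph{The far and regular contributions.} Off $B_\delta(p_\varepsilon)$ one has $he^{u_\varepsilon}=O(e^{-\lambda_\varepsilon})$ (by \autoref{outside}, together with Li's estimate \eqref{eq:local} on the annulus $B_{\delta_0}(p_0)\setminus B_\delta(p_\varepsilon)$) while the kernel stays bounded, so that piece is $O(e^{-\lambda_\varepsilon})$. Inside $B_\delta(p_\varepsilon)$ the measure $he^{u_\varepsilon}\dif\mu_\Sigma$ concentrates at $p_\varepsilon$ with total mass $1+O(e^{-\lambda_\varepsilon})$ (from \autoref{convergence}), and \eqref{eq:local} gives the envelope $he^{u_\varepsilon}(x)\le Ce^{\lambda_\varepsilon}(1+e^{\lambda_\varepsilon}\abs{x}^2)^{-2}$, whence $\int_{B_\delta(p_\varepsilon)}\abs{x}\,he^{u_\varepsilon}\dif\mu_\Sigma=O(e^{-\lambda_\varepsilon/2})$. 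Taylor expanding the (smooth) regular part of the kernel about $y=p_\varepsilon$ then gives, using these two facts and $\rho_\varepsilon=(8\pi-\varepsilon)+O(e^{-\lambda_\varepsilon})$,
\[
(8\pi-\varepsilon)\int_{B_\delta(p_\varepsilon)}\nabla_qR(q,y)\big|_{q=p_\varepsilon}\,h(y)e^{u_\varepsilon(y)}\dif\mu_\Sigma=\nabla G^*_\varepsilon(p_\varepsilon)+O(e^{-\lambda_\varepsilon/2}).
\]
(The volume density $\sqrt{\det g}=1+O(\abs{x}^2)$ of the normal chart only perturbs lower-order terms and may be ignored.)

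\emph{The singular contribution.} The remaining term $(8\pi-\varepsilon)\int_{B_\delta(p_\varepsilon)}\frac{x}{2\pi\abs{x}^2}h(x)e^{u_\varepsilon(x)}\dif\mu_\Sigma$ is the delicate one, since its integrand is odd to leading order; here Li's estimate alone is insufficient and one needs the refined profile
\[
u_\varepsilon(x)=\lambda_\varepsilon-2\ln\Big(1+\tfrac{(8\pi-\varepsilon)h(p_\varepsilon)}{8}e^{\lambda_\varepsilon}\abs{x}^2\Big)+\phi_\varepsilon(x),
\]
where, after the rescaling $z=\mu_\varepsilon x$ with $\mu_\varepsilon^2=\tfrac{(8\pi-\varepsilon)h(p_\varepsilon)}{8}e^{\lambda_\varepsilon}$, the error is $\phi_\varepsilon(x)=\mu_\varepsilon^{-1}w^{(1)}(z)+(\text{h.o.t.})$ with $w^{(1)}$ the solution of the linearized Liouville equation $\Delta w+\frac{8}{(1+\abs{z}^2)^2}w=-\frac{8\,\nabla\ln h(p_\varepsilon)\cdot z}{(1+\abs{z}^2)^2}$ that vanishes to first order at the origin — this last normalization coming from $\phi_\varepsilon(p_\varepsilon)=0$ and $\nabla\phi_\varepsilon(p_\varepsilon)=0$, which hold because $p_\varepsilon$ is the maximum of $u_\varepsilon$ — namely $w^{(1)}(z)=-\frac{(\nabla\ln h(p_\varepsilon)\cdot z)\abs{z}^2}{1+\abs{z}^2}$. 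Since the bubble samples the scale $\abs{x}\sim\mu_\varepsilon^{-1}$, i.e.\ $\abs{z}\sim1$, this inner expansion is the relevant one for the integral; inserting it, discarding the even contributions (in particular the $\varepsilon$– and dilation–type corrections, which are radial), and evaluating the two surviving integrals $\int_{\mathbb R^2}\frac{z_i^2}{\abs{z}^2}(1+\abs{z}^2)^{-2}\dif z=\tfrac\pi2$ and $\int_{\mathbb R^2}\frac{z_i^2}{(1+\abs{z}^2)^{3}}\dif z=\tfrac\pi4$, one finds, using that the prefactor $\tfrac{(8\pi-\varepsilon)h(p_\varepsilon)e^{\lambda_\varepsilon}}{8\mu_\varepsilon^2}$ is exactly $1$, that the singular term equals $\nabla\ln h(p_\varepsilon)+O(e^{-\lambda_\varepsilon/2})$. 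Adding the three pieces yields $0=\nabla\ln h(p_\varepsilon)+\nabla G^*_\varepsilon(p_\varepsilon)+O(e^{-\lambda_\varepsilon/2})$, which is the assertion.

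\emph{Main obstacle.} The one substantial step is establishing the refined profile of $\phi_\varepsilon$ to precision $O(e^{-\lambda_\varepsilon/2})$ after rescaling, i.e.\ controlling the non-radial part of $u_\varepsilon$ at scale $e^{-\lambda_\varepsilon/2}$. I would do this exactly as in \cite{CheLin02sharp}: Li's estimate \eqref{eq:local} identifies the leading bubble, and then one bootstraps through equation \eqref{blow-up sequence} using the non-degeneracy of the linearized operator about the standard bubble modulo its bounded kernel $\{\tfrac{1-\abs{z}^2}{1+\abs{z}^2},\tfrac{z_1}{1+\abs{z}^2},\tfrac{z_2}{1+\abs{z}^2}\}$, whose three directions are eliminated precisely by $\phi_\varepsilon(p_\varepsilon)=0$, $\nabla\phi_\varepsilon(p_\varepsilon)=0$ and the known total mass. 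Apart from the harmless replacement of $h$ by $h^+=h$ near $p_\varepsilon$, this is verbatim Chen--Lin's Estimate B, which is why the text omits the details. One may equivalently organise the whole argument through a local Pohozaev identity on $B_\delta(p_\varepsilon)$: the boundary integrals are then controlled by the bound $u_\varepsilon=\bar u_\varepsilon+\rho_\varepsilon G(\cdot,p_\varepsilon)+O(e^{-\lambda_\varepsilon/2})$ established above, and the bulk integral $\int_{B_\delta(p_\varepsilon)}(\nabla\ln h)\,he^{u_\varepsilon}$ by \eqref{eq:local}.
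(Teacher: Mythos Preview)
Your main approach --- differentiating the Green representation at $p_\varepsilon$ and splitting the kernel --- is different from what the paper (following Chen--Lin) does, and as written it contains a circularity. The far and regular pieces are indeed $O(e^{-\lambda_\varepsilon/2})$ by \autoref{estimate B} and the first-moment bound. But the singular piece cannot be evaluated to precision $O(e^{-\lambda_\varepsilon/2})$ from Li's estimate \eqref{eq:local} alone: with only $\phi_\varepsilon:=u_\varepsilon-v_\varepsilon=O(1)$, the rescaled integral carries a prefactor $\mu_\varepsilon\sim e^{\lambda_\varepsilon/2}$, so you genuinely need the refined profile $\phi_\varepsilon(\mu_\varepsilon^{-1}z)=\mu_\varepsilon^{-1}w^{(1)}(z)+o(\mu_\varepsilon^{-1})$. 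You propose to obtain this ``exactly as in \cite{CheLin02sharp}'', but in Chen--Lin the refined expansion (their Estimates~C--E, here \autoref{remainder}) is proved \emph{after} Estimate~B and \emph{uses} it in the bootstrap; so the appeal is circular. The arithmetic of your $w^{(1)}$ is correct, but you would need an independent derivation of that first-order profile to make this route stand on its own.

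The route the paper and Chen--Lin actually take is the Pohozaev alternative you mention only in your last sentence: multiply \eqref{blow-up sequence} by $\partial_{x_j}u_\varepsilon$ and integrate over $B_\delta(p_\varepsilon)$. The boundary terms are controlled by the $C^1$ outer bound of \autoref{estimate B}, and after one integration by parts the only surviving bulk term is $(8\pi-\varepsilon)\int_{B_\delta(p_\varepsilon)}(\partial_{x_j}h)\,e^{u_\varepsilon}$, which \eqref{eq:local} and concentration reduce to $(8\pi-\varepsilon)\,\partial_{x_j}\ln h(p_\varepsilon)+O(e^{-\lambda_\varepsilon/2})$; matching this with the boundary contribution of $\nabla(\rho_\varepsilon G)$ gives the lemma. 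This route uses nothing beyond Li's estimate and \autoref{estimate B}, and is logically prior to \autoref{remainder} --- which is exactly why Chen--Lin prove Estimate~B first. Your brief final paragraph is thus essentially the correct proof; promote it to the main argument and drop the Green-representation computation, or else supply a self-contained derivation of the $w^{(1)}$ expansion that does not invoke \autoref{remainder}.
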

 Notice that the Green function is symmetric and we conclude that
\begin{align*}
    \abs{\nabla\left(2\ln h^++\dfrac{8\pi-\varepsilon}{8\pi}A\right)\left(p_{\varepsilon}\right)}=O\left(e^{-\lambda_{\varepsilon}/2}\right).
\end{align*}

In $B_{\delta_0}(p_\varepsilon)$, we define the following function as in \cite{CheLin02sharp}
\begin{equation*}
    v_\varepsilon(p)=\ln\dfrac{e^{\lambda_{\varepsilon}}}{\left(1+\frac{(8\pi-\varepsilon)h(p_\varepsilon)}{8}e^{\lambda_{\varepsilon}}|p-q_{\varepsilon}|^2\right)^2},
\end{equation*}
where $q_{\varepsilon}$ is chosen to satisfy
\begin{equation*}
    \nabla v_{\varepsilon}(p_{\varepsilon})=\nabla\ln h(p_{\varepsilon}),
\end{equation*}
which implies $\abs{p_\varepsilon-q_\varepsilon}=O\left(e^{-\lambda_\varepsilon}\right)$.
We also set the error term as
\begin{equation*}
    \eta_\varepsilon(p)=u_\varepsilon(p)-v_\varepsilon(p)-(G_{\varepsilon}^*(p)-G_{\varepsilon}^*(p_\varepsilon))
\end{equation*}
and 
\begin{equation*}
    R_{\varepsilon}=\left(\frac{(8\pi-\varepsilon)h(p_\varepsilon)}{8}e^{\lambda_{\varepsilon}}\right)^{\frac12}\delta_0.
\end{equation*}
Then we have the following estimate for the scaled function $\tilde{\eta}_\varepsilon(z)=\eta_\varepsilon\left(\delta_0R_{\varepsilon}^{-1}z\right)$ for $|z|\leq R_{\varepsilon}$.

\begin{lem}[cf. Estimates C, D and E in \cite{CheLin02sharp}]\label{remainder}
For any $\tau\in(0,1)$, there exists a constant $C=C_\tau$ such that
\begin{equation*}
    \eta_\varepsilon(p)=\left(4-\dfrac{\rho_\varepsilon}{2\pi}\right)\ln{|p-p_\varepsilon|}+O\left(\lambda_\varepsilon e^{-\frac{\tau\lambda_\varepsilon}{2}}\sup_{\frac{\delta_0}{2}\leq|p-p_\varepsilon|\leq\delta_0}|\eta_\varepsilon|+e^{-\frac{\lambda_\varepsilon}{2}}\right)
\end{equation*}
 and
\begin{equation*}
    \abs{\tilde{\eta}_\varepsilon(z)}\leq C\left(1+|z|\right)^\tau\left(e^{-\tau\lambda_\varepsilon}+e^{-\frac{\tau}{2}\lambda_\varepsilon}|8\pi-\rho_\varepsilon|\right)
\end{equation*}
hold for $p\in \bar{B}_{\delta_0}(p_\varepsilon)\setminus{B_{\delta_0/2}(p_\varepsilon)}$ and $|z|\leq R_{\varepsilon}$. 
\end{lem}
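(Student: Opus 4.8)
\emph{Reduction.} Since $\delta_0$ is chosen so that $h\ge\frac12 h(p_0)>0$ on $B_{\delta_0}(p_0)$, everything below takes place where $h$ is positive, so the sign-changing hypothesis is irrelevant here and the scheme is that of \cite{CheLin02sharp}; I sketch it. Work in a local isothermal normal coordinate $x$ centred at $p_\varepsilon$, with conformal factor $e^{2\phi}$ satisfying $\phi(0)=0$, $\nabla\phi(0)=0$, $-\Delta\phi=Ke^{2\phi}$, and write $\Delta=\Delta_{\mathbb R^2}$, $\tilde h=he^{2\phi}$. Then \eqref{blow-up sequence} becomes $-\Delta u_\varepsilon=(8\pi-\varepsilon)(\tilde h e^{u_\varepsilon}-e^{2\phi})$, while $-\Delta v_\varepsilon=(8\pi-\varepsilon)h(p_\varepsilon)e^{v_\varepsilon}$ by construction and $-\Delta G_\varepsilon^*=\rho_\varepsilon e^{2\phi}$ since $\tilde G_\varepsilon$ is the regular part of $G(\cdot,p_\varepsilon)$. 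Subtracting, on $B_{\delta_0}(p_\varepsilon)$,
\begin{equation*}
-\Delta\eta_\varepsilon=(8\pi-\varepsilon)h(p_\varepsilon)e^{v_\varepsilon}\left(\frac{\tilde h}{h(p_\varepsilon)}e^{G_\varepsilon^*-G_\varepsilon^*(p_\varepsilon)}e^{\eta_\varepsilon}-1\right)+\big(\rho_\varepsilon-(8\pi-\varepsilon)\big)e^{2\phi}.
\end{equation*}
Here $\rho_\varepsilon=(8\pi-\varepsilon)+O(e^{-\lambda_\varepsilon})$ by \autoref{estimate B}; and $\frac{\tilde h}{h(p_\varepsilon)}e^{G_\varepsilon^*-G_\varepsilon^*(p_\varepsilon)}$ equals $1$ at $p_\varepsilon$, and, since $\nabla\phi(p_\varepsilon)=0$ and by \autoref{cp position}, has gradient $\nabla(\ln h^++G_\varepsilon^*)(p_\varepsilon)=O(e^{-\lambda_\varepsilon/2})$ there, hence equals $1+O(e^{-\lambda_\varepsilon/2}\abs{x}+\abs{x}^2)$. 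Finally, since $p_\varepsilon$ maximises $u_\varepsilon$, since $v_\varepsilon(p_\varepsilon)=\lambda_\varepsilon+O(e^{-\lambda_\varepsilon})$ (using $\abs{p_\varepsilon-q_\varepsilon}=O(e^{-\lambda_\varepsilon})$) and using \autoref{cp position} once more, one records the normalisation $\eta_\varepsilon(p_\varepsilon)=O(e^{-\lambda_\varepsilon})$, $\nabla\eta_\varepsilon(p_\varepsilon)=O(e^{-\lambda_\varepsilon/2})$.

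\emph{Rescaling.} Put $\mu_\varepsilon=\big((8\pi-\varepsilon)h(p_\varepsilon)e^{\lambda_\varepsilon}\big)^{-1/2}$, so that $R_\varepsilon=\delta_0\mu_\varepsilon^{-1}$, $\mu_\varepsilon\asymp e^{-\lambda_\varepsilon/2}$, and $\tilde\eta_\varepsilon(z)=\eta_\varepsilon(p_\varepsilon+\mu_\varepsilon z)$ for $\abs{z}\le R_\varepsilon$ as in the statement. With $z_\varepsilon=\mu_\varepsilon^{-1}(q_\varepsilon-p_\varepsilon)$, $\abs{z_\varepsilon}=O(\mu_\varepsilon)$, one has $(8\pi-\varepsilon)h(p_\varepsilon)\mu_\varepsilon^2 e^{v_\varepsilon(p_\varepsilon+\mu_\varepsilon z)}=W_\varepsilon(z):=(1+\frac18\abs{z-z_\varepsilon}^2)^{-2}$, and the equation scales to
\begin{equation*}
-\Delta\tilde\eta_\varepsilon=W_\varepsilon(z)\big(e^{\tilde\eta_\varepsilon}-1\big)+g_\varepsilon(z),\qquad\abs{g_\varepsilon(z)}\le C\mu_\varepsilon^2(1+\abs{z})^{-2}+Ce^{-\lambda_\varepsilon},
\end{equation*}
the first term of $g_\varepsilon$ being the $O(e^{-\lambda_\varepsilon/2}\abs{x}+\abs{x}^2)$ mismatch times $W_\varepsilon\lesssim(1+\abs{z})^{-4}$, the second being the mass defect. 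Once we know $\norm{\tilde\eta_\varepsilon}_{L^\infty}=o(1)$ (which follows from \autoref{convergence} and the local estimate \eqref{eq:local}) we write $e^{\tilde\eta_\varepsilon}-1=\tilde\eta_\varepsilon+O(\tilde\eta_\varepsilon^2)$, so that $\tilde\eta_\varepsilon$ nearly solves $L_\varepsilon w:=\Delta w+W_\varepsilon w=-g_\varepsilon$, a small perturbation of the linearised Liouville operator $L_0=\Delta+e^U$, $U=-2\ln(1+\frac18\abs{z}^2)$.

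\emph{Weighted estimate and matching.} Fix $\tau\in(0,1)$. The bounded kernel of $L_0$ on $\mathbb R^2$ is spanned by $Y_0=\frac{1-\abs{z}^2/8}{1+\abs{z}^2/8}$ and $Y_i=\frac{z_i}{1+\abs{z}^2/8}$ $(i=1,2)$ (nondegeneracy of the standard bubble, via the classification of Chen--Li \cite{CheLi91classification}), and since $\tau<1$ every $\psi$ with $L_0\psi=0$ on $\mathbb R^2$ and $\abs{\psi}\le C(1+\abs{z})^\tau$ belongs to $\mathrm{span}\{Y_0,Y_1,Y_2\}$. Moreover $(1+\abs{z})^\tau$ is a supersolution of $L_\varepsilon$ outside a fixed ball, since there $\Delta(1+\abs{z})^\tau+W_\varepsilon(1+\abs{z})^\tau\ge\frac12\tau^2(1+\abs{z})^{\tau-2}>0$. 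Combining the supersolution with a blow-up argument — were the bound to fail one divides $\tilde\eta_\varepsilon$ by its normalised weighted norm, extracts a limit $\psi$ solving $L_0\psi=0$ on $\mathbb R^2$ with $\abs{\psi}\le C(1+\abs{z})^\tau$ and, by the normalisation above (which exceeds $e^{-\tau\lambda_\varepsilon}\gg e^{-\lambda_\varepsilon}$), $\psi(0)=0$, $\nabla\psi(0)=0$, hence $\psi\equiv0$, a contradiction — one obtains
\begin{equation*}
\abs{\tilde\eta_\varepsilon(z)}\le C(1+\abs{z})^\tau\big(e^{-\tau\lambda_\varepsilon}+e^{-\frac{\tau}{2}\lambda_\varepsilon}\abs{8\pi-\rho_\varepsilon}\big),\qquad\abs{z}\le R_\varepsilon,
\end{equation*}
the $\abs{8\pi-\rho_\varepsilon}$ term carrying the mismatch between the mass $8\pi$ of $v_\varepsilon$ and the local mass $\rho_\varepsilon$ of $u_\varepsilon$ that propagates along the slowly decaying $Y_0$-direction. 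This is the second assertion. Evaluating it at $\abs{z}=R_\varepsilon$ controls $\eta_\varepsilon$ on $\partial B_{\delta_0}(p_\varepsilon)$; on the annulus $B_{\delta_0}(p_0)\setminus B_{\delta_0/2}(p_0)$ one substitutes the outer Green representation $u_\varepsilon=\bar u_\varepsilon+\rho_\varepsilon G(\cdot,p_\varepsilon)+O(e^{-\lambda_\varepsilon/2})$ of \autoref{estimate B} together with $v_\varepsilon(x)=-\lambda_\varepsilon-2\ln\frac{(8\pi-\varepsilon)h(p_\varepsilon)}{8}-4\ln\abs{x}+O(e^{-\lambda_\varepsilon})$ (the constant $1$ in the denominator of $v_\varepsilon$ being negligible for $\abs{x}\ge\delta_0/2$), and, since $G(x,p_\varepsilon)=-\frac1{2\pi}\ln\abs{x}+\tilde G_\varepsilon(x)$ and $G_\varepsilon^*=\rho_\varepsilon\tilde G_\varepsilon$, this yields $\eta_\varepsilon(x)=(4-\frac{\rho_\varepsilon}{2\pi})\ln\abs{x}+c_\varepsilon+O(e^{-\lambda_\varepsilon/2})$; matching $c_\varepsilon$ against the inner estimate at $\abs{z}=R_\varepsilon$ forces $c_\varepsilon=O\big(\lambda_\varepsilon e^{-\frac{\tau}{2}\lambda_\varepsilon}\sup_{\delta_0/2\le\abs{x}\le\delta_0}\abs{\eta_\varepsilon}+e^{-\lambda_\varepsilon/2}\big)$, which is the first assertion; an interior elliptic estimate for $L_\varepsilon$ upgrades it to the stated scaled form.

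\emph{Main obstacle.} The crux is the linear analysis: the bubble carries the bounded, essentially constant element $Y_0$ in its kernel, so naive comparison with $(1+\abs{z})^\tau$ is obstructed. One must kill the kernel through the exact normalisation $\eta_\varepsilon(p_\varepsilon),\nabla\eta_\varepsilon(p_\varepsilon)$, isolate and bound the unavoidable $\abs{8\pi-\rho_\varepsilon}$-component living in the $Y_0$-direction, and keep the estimate self-referential only through the small factor $\lambda_\varepsilon e^{-\tau\lambda_\varepsilon/2}$ so that it closes by bootstrap. Pinning down exactly the coefficient $4-\rho_\varepsilon/(2\pi)=(8\pi-\rho_\varepsilon)/(2\pi)$ of $\ln\abs{x}$ on the annulus — the quantity that ultimately produces the $-\varepsilon$ on the left-hand side of \autoref{blow-up id} — is where the bookkeeping is heaviest.
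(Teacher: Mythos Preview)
The paper itself gives no proof of this lemma, stating only that ``the proofs are the same as those in \cite{CheLin02sharp}, so we omit them here,'' and your sketch is precisely an outline of the Chen--Lin argument (Estimates~C--E there): derive the equation for $\eta_\varepsilon$, rescale, linearise about the bubble, exploit the nondegeneracy of $L_0=\Delta+e^U$ together with the normalisation $\eta_\varepsilon(p_\varepsilon),\nabla\eta_\varepsilon(p_\varepsilon)$ to run a weighted blow-up/contradiction, and match with the outer Green representation to extract the $(4-\rho_\varepsilon/2\pi)\ln|x|$ term. Aside from a harmless constant factor in your scaling $\mu_\varepsilon$ versus the paper's $\delta_0 R_\varepsilon^{-1}$ and a sign-convention slip (your barrier $(1+|z|)^\tau$ satisfies $L_\varepsilon[(1+|z|)^\tau]>0$, so it is a \emph{sub}solution in the usual terminology, though its role in the argument is exactly as you describe), the approach and the bookkeeping are correct.
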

The following lemma shows the relationship between $\rho_\varepsilon-8\pi$ and $\eta_\varepsilon$.
\begin{lem}[cf. Estimate F in \cite{CheLin02sharp}]\label{boundary term}
\begin{equation*}
    \rho_\varepsilon-8\pi=-\int_{\partial B_{\delta_0}(p_\varepsilon)}\frac{\partial \eta_\varepsilon}{\partial\nu}d\sigma+O\left(e^{-\lambda_\varepsilon}\right),
\end{equation*}
where $\nu$ denotes the unit outer normal of $\partial B_{\delta_0}(p_\varepsilon)$.
\end{lem}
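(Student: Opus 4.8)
The plan is to integrate the defining equation of $\eta_\varepsilon$ over $B_{\delta_0}(p_\varepsilon)$ and read off the boundary flux by the divergence theorem. Since $G_\varepsilon^*=\rho_\varepsilon\widetilde G_\varepsilon$ involves only the \emph{regular} part $\widetilde G_\varepsilon=G(\cdot,p_\varepsilon)+\tfrac1{2\pi}\ln\abs{x}$ of the Green function, the function $\eta_\varepsilon=u_\varepsilon-v_\varepsilon-\bigl(G_\varepsilon^*-G_\varepsilon^*(p_\varepsilon)\bigr)$ extends smoothly across $p_\varepsilon$, so the divergence theorem on the closed disk gives
\[
-\int_{\partial B_{\delta_0}(p_\varepsilon)}\frac{\partial\eta_\varepsilon}{\partial\nu}\,d\sigma=\int_{B_{\delta_0}(p_\varepsilon)}\bigl(-\Delta\eta_\varepsilon\bigr)\,d\mu_\Sigma .
\]
I would evaluate the right side in a local isothermal coordinate $x$ centered at $p_\varepsilon$, with metric $e^{2\varphi}\abs{dx}^2$, $\varphi(0)=0$, $\nabla\varphi(0)=0$, using throughout that in dimension two $\Delta\,d\mu_\Sigma=\Delta_0\,dx$ and $\int_{\partial B_{\delta_0}}\partial_\nu f\,d\sigma=\int_{\partial B_{\delta_0}}\partial_{\nu_0}f\,d\sigma_0$; this lets me compute everything with the flat Laplacian against the flat measure, so that no conformal factor survives except through a single ball‑volume term. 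The three building blocks then give: $\Delta_0 u_\varepsilon=-(8\pi-\varepsilon)e^{2\varphi}(he^{u_\varepsilon}-1)$ from \eqref{blow-up sequence}; the bubble obeys the flat Liouville equation $-\Delta_0 v_\varepsilon=(8\pi-\varepsilon)h(p_\varepsilon)e^{v_\varepsilon}$ on $\mathbb R^2$; and $\Delta_0\widetilde G_\varepsilon\equiv e^{2\varphi}$ on the punctured disk, the point mass in $\Delta G(\cdot,p_\varepsilon)$ being cancelled by $\Delta_0(\tfrac1{2\pi}\ln\abs{x})=\delta_{p_\varepsilon}$.

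Now integrate over the coordinate disk (which may be taken to be $B_{\delta_0}(p_\varepsilon)$, the discrepancy being a region where every integrand below is $O(e^{-\lambda_\varepsilon})$ by \autoref{outside}). From the $u_\varepsilon$‑equation, using $\int_\Sigma he^{u_\varepsilon}\,d\mu_\Sigma=1$ (integrate \eqref{blow-up sequence} over $\Sigma$), the definition of $\rho_\varepsilon$, and \autoref{outside} to absorb the shift of centers,
\[
\int_{B_{\delta_0}(p_\varepsilon)}\Delta_0 u_\varepsilon\,dx=-(8\pi-\varepsilon)\int_{B_{\delta_0}(p_\varepsilon)}(he^{u_\varepsilon}-1)\,d\mu_\Sigma=-\rho_\varepsilon+(8\pi-\varepsilon)\abs{B_{\delta_0}(p_\varepsilon)}_\Sigma+O(e^{-\lambda_\varepsilon}).
\]
From the Liouville equation and the explicit value $\int_{\mathbb R^2}e^{v_\varepsilon}\,dx=\tfrac{8\pi}{(8\pi-\varepsilon)h(p_\varepsilon)}$, together with the bound $e^{v_\varepsilon}(x)\le Ce^{-\lambda_\varepsilon}\abs{x}^{-4}$ for $\abs{x}\ge\delta_0$ which makes the tail $O(e^{-\lambda_\varepsilon})$, one gets $\int_{B_{\delta_0}(p_\varepsilon)}\Delta_0 v_\varepsilon\,dx=-8\pi+O(e^{-\lambda_\varepsilon})$; and $\int_{B_{\delta_0}(p_\varepsilon)}\Delta_0 G_\varepsilon^*\,dx=\rho_\varepsilon\int_{B_{\delta_0}(p_\varepsilon)}e^{2\varphi}\,dx=\rho_\varepsilon\abs{B_{\delta_0}(p_\varepsilon)}_\Sigma$. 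Since $-\Delta_0\eta_\varepsilon=-\Delta_0 u_\varepsilon+\Delta_0 v_\varepsilon+\Delta_0 G_\varepsilon^*$, adding the three pieces and using $\rho_\varepsilon=8\pi-\varepsilon+O(e^{-\lambda_\varepsilon})$ (established in the proof of \autoref{estimate B}),
\[
-\int_{\partial B_{\delta_0}(p_\varepsilon)}\frac{\partial\eta_\varepsilon}{\partial\nu}\,d\sigma=(\rho_\varepsilon-8\pi)+\bigl(\rho_\varepsilon-(8\pi-\varepsilon)\bigr)\abs{B_{\delta_0}(p_\varepsilon)}_\Sigma+O(e^{-\lambda_\varepsilon})=\rho_\varepsilon-8\pi+O(e^{-\lambda_\varepsilon}),
\]
which is the assertion.

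The step that demands the most care is exactly this bookkeeping of $O(1)$ quantities: the term $(8\pi-\varepsilon)\abs{B_{\delta_0}(p_\varepsilon)}_\Sigma$ coming from the constant $-1$ in the $u_\varepsilon$‑equation and the term $\rho_\varepsilon\abs{B_{\delta_0}(p_\varepsilon)}_\Sigma$ coming from $\Delta G_\varepsilon^*$ are individually $O(1)$, and only their difference is exponentially small --- which is precisely why $\rho_\varepsilon=8\pi-\varepsilon+O(e^{-\lambda_\varepsilon})$ must be invoked. One must also avoid expanding $\Delta_\Sigma v_\varepsilon$ or $\Delta_\Sigma\widetilde G_\varepsilon$ pointwise, since the conformal‑factor corrections there are $O(\abs{x}^2)$ times the density, whose integral over $B_{\delta_0}(p_\varepsilon)$ is $O(\lambda_\varepsilon e^{-\lambda_\varepsilon})$, too large for the claimed error; pairing the flat Laplacian with the flat volume form (possible only because $\dim\Sigma=2$, and consistent with the definitions of $v_\varepsilon$ and $\widetilde G_\varepsilon$ borrowed from \cite{CheLin02sharp}) confines all metric corrections to the single cancelling quantity $\abs{B_{\delta_0}(p_\varepsilon)}_\Sigma$. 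Apart from this, the argument needs only the elementary bubble integral, \autoref{outside}, and the estimate $\rho_\varepsilon-(8\pi-\varepsilon)=O(e^{-\lambda_\varepsilon})$; the finer asymptotics of \autoref{cp position} and \autoref{remainder} play no role here.
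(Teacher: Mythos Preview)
Your argument is correct. The paper itself omits the proof of this lemma, deferring to \cite{CheLin02sharp}, so there is no in-paper proof to compare against; but your route---apply the divergence theorem to $\eta_\varepsilon$ on $B_{\delta_0}(p_\varepsilon)$, integrate each of $\Delta_0 u_\varepsilon$, $\Delta_0 v_\varepsilon$, $\Delta_0 G_\varepsilon^*$ separately, and use $\rho_\varepsilon=(8\pi-\varepsilon)+O(e^{-\lambda_\varepsilon})$ to cancel the two $O(1)$ volume terms---is exactly the standard one, and is consistent with the identity $\Delta_0\eta_\varepsilon=-(8\pi-\varepsilon)h(p_\varepsilon)e^{v_\varepsilon}H(x,\eta_\varepsilon)+O(e^{-\lambda_\varepsilon})$ the paper derives and uses in the proof of \autoref{blow-up id}. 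Your remarks about working with the flat Laplacian against the flat measure (so that $\partial_{\nu_g}\,d\sigma_g=\partial_{\nu_0}\,d\sigma_0$ and $\Delta_g\,d\mu_g=\Delta_0\,dx$ in two dimensions) and about the shift of centers $p_0\leftrightarrow p_\varepsilon$ being harmless by \autoref{outside} and Li's estimate are the right justifications at the right places.
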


\section{Proof of \autoref{blow-up id}}
In this section, we prove \autoref{blow-up id} as in \cite{CheLin02sharp}. 
\begin{proof}
By \autoref{outside}, we have
\begin{equation}\label{differnce}
    \rho_\varepsilon=8\pi-\varepsilon+O\left(e^{-\lambda_\varepsilon}\right).
\end{equation}
This implies that we need to control $\rho_\varepsilon-8\pi$, which is equivalent to compute $-\int_{\partial B_{\delta_0}(p_\varepsilon)}\frac{\partial \eta_\varepsilon}{\partial\nu}d\sigma$ by \autoref{boundary term}. To do so, we set 
\begin{equation*}
    \psi=\dfrac{1-a|x-y_{\varepsilon}|^2}{1+a|x-y_{\varepsilon}|^2} \quad \text{for} \ x\in\mathbb{R}^2,
\end{equation*}
where $a=\frac{(8\pi-\varepsilon)h(p_\varepsilon)}{8}e^{\lambda_\varepsilon}$. Then $\psi$ satisfies
\begin{equation}\label{eq:psi}
\Delta_0\psi+(8\pi-\varepsilon)h(p_\varepsilon)e^{v_\varepsilon}\psi=0,
\end{equation}
where $\Delta_0$ is the standard Laplacian in $\mathbb{R}^2$.
On the other hand, by \eqref{differnce}, we have
\begin{equation}\label{eq:eta}
\begin{split}
    \Delta_0\eta_\varepsilon&=\Delta_0 u_\varepsilon-\Delta_0 v_\varepsilon-\Delta_0 G_{\varepsilon}^*\\
    &=-(8\pi-\varepsilon)h(p_\varepsilon)e^{v_\varepsilon(x)}H(x,\eta_\varepsilon)+O(e^{-\lambda_\varepsilon}),
    \end{split}
\end{equation}
where 
\begin{equation*}
    H(x,t)=\frac{h^*(x)}{h(p_\varepsilon)}e^{t+G_{\varepsilon}^*(x)-G_{\varepsilon}^*(0)}-1
\end{equation*}
and $h^*(x)=h(x)e^{2\phi(x)}$, $\phi(x)$ comes from the metric $ds^2=e^{2\phi(x)}dx^2$ with $\phi(0)=0$ and $\nabla\phi(0)=0$.
By using \eqref{eq:psi}, \eqref{eq:eta} and integration by parts, we get
\begin{equation*}
\begin{split}
    \int_{\partial B_{\delta_0}(p_\varepsilon)}\left(\psi\dfrac{\partial\eta_\varepsilon}{\partial\nu}-\eta_\varepsilon\dfrac{\partial\psi}{\partial\nu}\right)d\sigma&=\int_{B_{\delta_0}(p_\varepsilon)}(\psi\Delta_0\eta_\varepsilon-\eta_\varepsilon\Delta_0\psi)dx\\
    &=-\int_{B_{\delta_0}(p_\varepsilon)}\psi(x)(8\pi-\varepsilon)h(p_\varepsilon)e^{v_\varepsilon(x)}(H(x,\eta_\varepsilon)-\eta_\varepsilon(x))+O\left(e^{-\lambda_\varepsilon}\right).
    \end{split}
\end{equation*}
Since $\psi$ satisfies
\begin{equation*}
    \psi(x)=-1+\dfrac{2}{1+a|x-y_{\varepsilon}|^2}=-1+O\left(e^{-\lambda_\varepsilon}\right) \ \text{and} \ |\nabla\psi(x)|=O\left(e^{-\lambda_\varepsilon}\right)
\end{equation*}
 for $x\in\partial B_{\delta_0}(p_\varepsilon)$, we have
 \begin{equation*}
 \begin{split}
     -\int_{\partial B_{\delta_0}(p_\varepsilon)}\dfrac{\partial \eta_\varepsilon}{\partial\nu}d\sigma=-\int_{B_{\delta_0}(p_\varepsilon)}\psi(x)(8\pi-\varepsilon)h(p_\varepsilon)e^{v_\varepsilon(x)}(H(x,\eta_\varepsilon)-\eta_\varepsilon(x))+O\left(e^{-\lambda_\varepsilon}\right).
      \end{split}
 \end{equation*}

Recall
\begin{equation*}
\begin{split}
    H(x,\eta_\varepsilon)-\eta_\varepsilon(x)&=\dfrac{h^*(x)}{h(p_\varepsilon)}e^{\eta_\varepsilon+G_{\varepsilon}^*(x)-G_{\varepsilon}^*(0)}-1-\eta_\varepsilon(x)\\
    &=H(x,0)+H(x,0)\eta_\varepsilon+O(1)|\eta_\varepsilon|^2,
    \end{split}
\end{equation*}
where
\begin{equation*}
\begin{split}
    H(x,0)&=\frac{h^*(x)}{h(p_\varepsilon)}e^{G_{\varepsilon}^*(x)-G_{\varepsilon}^*(0)}-1\\
    &=\frac{1}{h(p_\varepsilon)}e^{2\phi(x)+\ln{h}(x)+G^*(x)-G^*(p_\varepsilon)}-1\\
    &=\langle b_\varepsilon,x\rangle+\langle B_\varepsilon x,x\rangle+O(1)|x|^{2+\beta},
    \end{split}
\end{equation*}
where $b_\varepsilon$ and $B_\varepsilon$ are the gradient and Hessian of $H(x,0)$ at $x=0$. By \autoref{cp position}, we have $|b_\varepsilon|=O\left(e^{-\lambda/2}\right)$.

Let $z$ and $z_\varepsilon$ satisfy
\begin{align*}
   \begin{cases}
   x=e^{-\frac{\lambda_\varepsilon}{2}}\left(\dfrac{h(p_\varepsilon)(8\pi-\varepsilon)}{8}\right)^{-\frac12}z,\\
   y_{\varepsilon}=e^{-\frac{\lambda_\varepsilon}{2}}\left(\dfrac{h(p_\varepsilon)(8\pi-\varepsilon)}{8}\right)^{-\frac12}z_\varepsilon.
    \end{cases}
\end{align*}
Then we get
\begin{align*}
    \left|\int_{ B_{\delta_0}(p_\varepsilon)}e^{v_\varepsilon}\langle b_\varepsilon,x\rangle dx\right|\leq& Ce^{-\lambda_\varepsilon}\int_{|z|\leq R_0}\left(1+|z-z_\varepsilon|^2\right)^{-2}|z|dz=O\left(e^{-\lambda_\varepsilon}\right),\\
    \int_{ B_{\delta_0}(p_\varepsilon)}e^{v_\varepsilon}|x|^{2+\beta} dx\leq& Ce^{-\frac{2+\beta}{2}\lambda_\varepsilon}\int_{|z|\leq R_0}\left(1+|z|^2\right)^{-2}|z|^{2+\beta}dz=O\left(e^{-\lambda_\varepsilon}\right)
\end{align*}
and
\begin{equation*}
\begin{split}
    \int_{ B_{\delta_0}(p_\varepsilon)}e^{v_\varepsilon}(x_\alpha-p_{\varepsilon,\alpha})(x_\beta-p_{\varepsilon,\beta}) dx=&\left((8\pi-\varepsilon)\frac{h(p_\varepsilon)}{8}\right)^{-2}e^{-\lambda_\varepsilon}\int_{|z|\leq R_0}\left(1+|z-z_\varepsilon|^2\right)^{-2}z_\alpha z_\beta dz\\
    &=\left((8\pi-\varepsilon)\frac{h(p_\varepsilon)}{8}\right)^{-2}e^{-\lambda_\varepsilon}\pi\left[\delta_{\alpha\beta}\ln{R_{\varepsilon}}+O\left(e^{-\frac{\lambda_\varepsilon}{2}}\right)\right],
    \end{split}
\end{equation*}
where $x_\alpha$ stands for the $\alpha$-th coordinate of $x$ and $1\leq\alpha,\beta\leq2$.
Putting those estimates above together, we have
\begin{equation*}
\begin{split}
    \int_{ B_{\delta_0}(p_\varepsilon)}(8\pi-\varepsilon)h(p_\varepsilon)e^{v_\varepsilon}H(x,0)dx&=\frac{32\pi}{(8\pi-\varepsilon)h(p_\varepsilon)}\left(B_\varepsilon^{11}+B_\varepsilon^{22}\right)e^{-\lambda_\varepsilon}\lambda_\varepsilon+O(1)e^{-\lambda_\varepsilon}.
    \end{split}
\end{equation*}
Note that $\Delta_0 G_{\varepsilon}^*(0)=\rho_\varepsilon=(8\pi-\varepsilon)+O\left(e^{-\lambda_\varepsilon}\right)$ and $-\Delta_0\phi(0)=K(p_\varepsilon)$. By \autoref{cp position}, we know
\begin{equation*}
\begin{split}
    B_\varepsilon^{11}+B_\varepsilon^{22}&=\dfrac12\Delta_0 H(0,0)\\
    &=\frac12(\Delta\ln{h}(p_\varepsilon)+8\pi-\varepsilon-2K(p_\varepsilon))+O\left(e^{-\lambda_\varepsilon}\right).
    \end{split}
\end{equation*}
For the remainder terms, we use \autoref{remainder} to get
\begin{equation*}
\begin{split}
    &\int_{B_{\delta_0}(p_{\varepsilon})} e^{v_\varepsilon}H(x,0)\eta_\varepsilon(x)dx=O\left(e^{-\lambda_\varepsilon}\right)\\
    &\int_{B_{\delta_0})(p_{\varepsilon})} e^{v_\varepsilon}\eta_\varepsilon^2(x)dx=O\left(e^{-\lambda_\varepsilon}+e^{-\tau\lambda_\varepsilon}|8\pi-\rho_\varepsilon|\right).
    \end{split}
\end{equation*}
    Therefore, 
    \begin{equation*}
        \rho_\varepsilon-8\pi=\dfrac{16\pi}{(8\pi-\varepsilon)h(p_\varepsilon)}\left[\Delta \ln h(p_\varepsilon)+8\pi-2K(p_\varepsilon)\right]\lambda_{\varepsilon}e^{-\lambda_{\varepsilon}}+O\left(e^{-\lambda_{\varepsilon}}\right)
    \end{equation*}
    and this completes the proof.
\end{proof}

\section{Proof of Theorem \ref{functional value}}

\begin{proof}On one hand, checking the proof in  \cite[Theorem 1.2]{SunZhu20global} step by step, we have
\begin{align}\label{eq:lower-functional}
\begin{split}
    \inf_{u\in H^1\left(\Sigma\right)}J_{8\pi}(u)=\lim_{\varepsilon\to 0}J_{8\pi}\left(u_{\varepsilon}\right)&\geq-1-\ln\pi-\left(\ln h(p_0)+\dfrac12 A(p_0)\right)\\
    &\geq-1-\ln\pi-\max_{p\in\Sigma}\left(\ln h^+(p)+\dfrac12 A(p)\right).
    \end{split}
\end{align}
We sketch the proof here. Without loss of generality, up to a conformal change of the metric, we may assume that the metric is the Euclidean metric around $p_0$ and we also assume $p_0$ is the origin $o\in\mathbb{B}\subset\Sigma$.  Choose $p_\varepsilon\to p_0$ such that
\begin{align*}
    \lambda_{\varepsilon}=u_{\varepsilon}\left(p_\varepsilon\right)=\max_{\Sigma}u_{\varepsilon}\to+\infty.
\end{align*}
Set $r_{\varepsilon}=e^{-\lambda_{\varepsilon}/2}$ and 
\begin{align*}
    \tilde u_{\varepsilon}=u_{\varepsilon}\left(p_\varepsilon+r_{\varepsilon}x\right)+2\ln r_{\varepsilon},\quad\abs{x}< r_{\varepsilon}^{-1}\left(1-\abs{p_\varepsilon}\right).
\end{align*}
Then $\tilde u_{\varepsilon}$ converges to $w$ in $C^{\infty}_{loc}\left(\mathbb{R}^2\right)$ where 
\begin{align*}
    w(x)=-2\ln\left(1+\pi h(p_0)\abs{x}^2\right).
\end{align*}
We denote by $o_{\varepsilon}(1)$ (resp. $o_{R}(1), o_{\delta}(1)$) the terms which tents to zero as $\varepsilon\to0$ (resp. $R\to\infty, \delta\to0$). Moreover, $o_{\varepsilon}(1)$ may depend on $R,\delta$, while $o_{R}(1)$ may depend on $\delta$.  We have
\begin{align*}
    \dfrac{1}{16\pi}\int_{\mathbf{B}_{r_{\varepsilon}R}(p_\varepsilon)}\abs{\nabla u_{\varepsilon}}^2=\dfrac{1}{16\pi}\int_{\mathbf{B}_{R}}\abs{\nabla\tilde{u}_{\varepsilon}}^2=\ln\left(\pi h(p_0)R^2\right)-1+o_{\varepsilon}(1)+o_{R}(1).
\end{align*}
According to \autoref{convergence}, a direct calculation yields
\begin{align*}
    \dfrac{1}{16\pi}\int_{\Sigma\setminus\mathrm{B}_{\delta}(p_{\varepsilon})}\abs{\nabla u_{\varepsilon}}^2=-2\ln\delta+\dfrac12 A(p_0)+o_{\varepsilon}(1)+o_{\delta}(1).
\end{align*}
Under polar coordinates $(r,\theta)$, set
\begin{align*}
    u^*_{\varepsilon}(r)=\dfrac{1}{2\pi}\int_{0}^{2\pi} u_{\varepsilon}\left(p_{\varepsilon}+re^{\sqrt{-1}\theta}\right)\dif\theta.
\end{align*}
Then
\begin{align*}
    u^*_{\varepsilon}(\delta)=&\int_{\Sigma}u_{\varepsilon}-4\ln\delta+ A(p_0)+o_{\varepsilon}(1)+o_{\delta}(1),\\
    u^*_{\varepsilon}\left(r_{\varepsilon}R\right)=&-2\ln r_{\varepsilon}-2\ln\left(\pi h(p_0)R^2\right)+o_{\varepsilon}(1)+o_{R}(1).
\end{align*}
Solve
\begin{align*}
\begin{cases}
 -\Delta \xi_{\varepsilon}=0,&\text{in}\ \mathbf{B}_{\delta}\left(p_{\varepsilon}\right)\setminus\mathbf{B}_{r_{\varepsilon}R}\left(p_{\varepsilon}\right),\\
 \xi_{\varepsilon}=u_{\varepsilon}^*,&\text{on}\ \partial\left(\mathbf{B}_{\delta}\left(p_{\varepsilon}\right)\setminus\mathbf{B}_{r_{\varepsilon}R}\left(p_{\varepsilon}\right)\right).
\end{cases}
\end{align*}
We have
\begin{align*}
\begin{split}
    \dfrac{1}{16\pi}\int_{\mathbf{B}_{\delta}\left(p_{\varepsilon}\right)\setminus\mathbf{B}_{r_{\varepsilon}R}\left(p_{\varepsilon}\right)}\abs{\nabla u_{\varepsilon}}^2&\geq\dfrac{1}{16\pi}\int_{\mathbf{B}_{\delta}\left(p_{\varepsilon}\right)\setminus\mathbf{B}_{r_{\varepsilon}R}\left(p_{\varepsilon}\right)}\abs{\nabla u^*_{\varepsilon}}^2\\
    &\geq\dfrac{1}{16\pi} \int_{\mathbf{B}_{\delta}\left(p_{\varepsilon}\right)\setminus\mathbf{B}_{r_{\varepsilon}R}\left(p_{\varepsilon}\right)}\abs{\nabla\xi_{\varepsilon}}^2=\dfrac{\left(u_{\varepsilon}^*(\delta)-u_{\varepsilon}^*(r_{\varepsilon}R)\right)^2}{8\left(\ln\delta-\ln\left(r_{\varepsilon}R\right)\right)}.
    \end{split}
\end{align*}
Thus
\begin{align*}
\begin{split}
    \dfrac{1}{16\pi}\int_{\mathbf{B}_{\delta}\left(p_{\varepsilon}\right)\setminus\mathbf{B}_{r_{\varepsilon}R}\left(p_{\varepsilon}\right)}\abs{\nabla u_{\varepsilon}}^2\geq&\dfrac{\left(u_{\varepsilon}^*(\delta)-u_{\varepsilon}^*(r_{\varepsilon}R)\right)^2}{-8\ln r_{\varepsilon}}\left(1+\dfrac{\ln\left(R/\delta\right)}{-\ln r_{\varepsilon}}\right)\\
    =&\dfrac{\left(\tau_{\varepsilon}+\int_{\Sigma}u_{\varepsilon}-2\ln r_{\varepsilon}\right)^2}{-8\ln r_{\varepsilon}}+\dfrac{1}{8}\left(2+\dfrac{\tau_{\varepsilon}}{\ln r_{\varepsilon}}+\dfrac{\int_{\Sigma}u_{\varepsilon}}{\ln r_{\varepsilon}}\right)^2\ln(R/\delta)\\
    &-\int_{\Sigma}u_{\varepsilon}-4\ln\left(R/\delta\right)-A(p_0)-2\ln(\pi h(p_0))\\
    &+o_R(1)+o_\delta(1),
    \end{split}
\end{align*}
where  
\begin{align*}
\begin{split}
    \tau_{\varepsilon}&=u^*_{\varepsilon}(\delta)-u^*_{\varepsilon}\left(r_{\varepsilon}R\right)-\int_{\Sigma}u_{\varepsilon}+2\ln r_{\varepsilon}\\
    &=4\ln\left(R/\delta\right)+ A(p_0)+2\ln\left(\pi h(p_0)\right)+o_{\varepsilon}(1)+o_{\delta}(1)+o_{R}(1).
    \end{split}
\end{align*}
Hence, we get
\begin{align*}
\begin{split}
    C\geq J_{8\pi}\left(u_{\varepsilon}\right)\geq&-1-\ln\pi-\ln h(p_0)-\dfrac12A(p_0)\\
    &+\dfrac{\left(\tau_{\varepsilon}+\int_{\Sigma}u_{\varepsilon}-2\ln r_{\varepsilon}\right)^2}{-8\ln r_{\varepsilon}}+\dfrac{1}{8}\left(\left(2+\dfrac{\tau_{\varepsilon}}{\ln r_{\varepsilon}}+\dfrac{\int_{\Sigma}u_{\varepsilon}}{\ln r_{\varepsilon}}\right)^2-16\right)\ln(R/\delta)\\
    &+o_{\varepsilon}(1)+o_{R}(1)+o_{\delta}(1)
    \end{split}
\end{align*}
which implies
\begin{align*}
    \int_{\Sigma}u_{\varepsilon}=-\lambda_{\varepsilon}+O\left(\sqrt{\lambda_{\varepsilon}}\right)
\end{align*}
and we obtain \eqref{eq:lower-functional}.

On the other hand, checking the proof in \cite[Theorem 1.2]{DinJosLiWan97differential}  step by step, for each $p$ with $h(p)>0$, there exists a sequence $\phi_{\varepsilon}\in H^1\left(\Sigma\right)$ such that
\begin{align*}
\begin{split}
    J_{8\pi}\left(\phi_{\varepsilon}\right)=&-1-\ln\pi-\left(\ln h(p)+\dfrac12A(p)\right)\\
    &-\dfrac{1}{4}\left(\Delta\ln h(p)+8\pi-2K(p)+\abs{\nabla\left(\ln h+\dfrac12A\right)(p)}^2\right)\varepsilon\ln\varepsilon^{-1}\\
    &+o\left(\varepsilon\ln\varepsilon^{-1}\right).
    \end{split}
\end{align*}
Here we used the fact that the Green function $G$ is symmetric. 
These test functions $\phi_{\varepsilon}$ can be constructed as following: without loss of generality, assume $p=0$ and 
\begin{align*}
    8\pi G(x,0)=&-2\ln\abs{x}+A(p)+b_1x_1+b_2x_2+\beta(x),
\end{align*}
and take
\begin{align*}
    \phi_{\varepsilon}(x)=\begin{cases}
    -2\ln\left(\abs{x}^2+\varepsilon\right)+b_1x_1+b_2x_2+\ln\varepsilon,&\abs{x}<\alpha_{\varepsilon}\sqrt{\varepsilon},\\
    8\pi G(x,0)-\eta\left(\alpha_{\varepsilon}\sqrt{\varepsilon}\abs{x}\right)\beta(x)+C_{\varepsilon}+\ln\varepsilon,&\alpha_{\varepsilon}\sqrt{\varepsilon}\leq\abs{x}<2\alpha_{\varepsilon}\sqrt{\varepsilon},\\
    8\pi G(x,0)+C_{\varepsilon}+\ln\varepsilon,&\abs{x}\geq 2\alpha_{\varepsilon}\sqrt{\varepsilon},
    \end{cases}
\end{align*}
where $\eta$ is a cutoff function supported in $[0,2]$ and $\eta=1$ on $[0,1]$ and the positive constants $\alpha_{\varepsilon}$ and $C_{\varepsilon}$ are chosen carefully. The assumption $h$ is positive in \cite{DinJosLiWan97differential} is used only to ensure that
\begin{align*}
    \lim_{\varepsilon\searrow0}\int_{\Sigma}he^{\phi_{\varepsilon}}>0.
\end{align*}
If $p$ is a critical point (e.g., a maximum point) of the function $2\ln h^++A$, then
\begin{align*}
    J_{8\pi}\left(\phi_{\varepsilon}\right)=&-1-\ln\pi-\left(\ln h(p)+\dfrac12A(p)\right)-\dfrac{1}{4}\left(\Delta\ln h(p)+8\pi-2K(p)\right)\varepsilon\ln\varepsilon^{-1}+o\left(\varepsilon\ln\varepsilon^{-1}\right).
\end{align*}
  This gives
\begin{align*}
    \inf_{u\in H^1\left(\Sigma\right)}J_{8\pi}(u)=-1-\ln\pi-\max_{p\in\Sigma}\left(\ln h^+(p)+\dfrac12A(p)\right)=-1-\ln\pi-\left(\ln h(p_0)+\dfrac12 A(p_0)\right).
\end{align*}
In particular, the blow-up point $p_0$ must be a maximum point of the function $\ln h^++A$. 

\end{proof}

\begin{rem}
One can write down the $o_\varepsilon(1)$ as follows. By  \autoref{estimate B} and \eqref{expansion G}, direct computations give us
\begin{align*}
\begin{split}
    \dfrac{1}{16\pi}\int_{\Sigma\setminus{B}_{\delta}(p_{\varepsilon})}\abs{\nabla u_{\varepsilon}}^2=&\left(1-\dfrac{\varepsilon}{4\pi}+\frac{\varepsilon^2}{64\pi^2}+O\left(e^{-\lambda_\varepsilon}\right)\right)\left(-2\ln\delta+\dfrac12 A(p_\varepsilon)+O\left(e^{-\lambda_\varepsilon}\right)+o_{\delta}(1)\right)+O\left(e^{-\lambda_\varepsilon}\right)\\
    =&-2\ln\delta+\dfrac12 A(p_\varepsilon)
    -\dfrac{\varepsilon}{4\pi}\left(-2\ln\delta+\dfrac12 A(p_\varepsilon)+O\left(e^{-\lambda_\varepsilon}\right)+o_{\delta}(1)\right)\\
     &\quad+O\left(\varepsilon^2\right)+O\left(e^{-\lambda_\varepsilon}\right)+o_{\delta}(1).
    \end{split}
\end{align*}
From the proof of \autoref{blow-up id}, we also get the following

\begin{align*}
    \int_{\mathrm{B}_{\delta}(p_{\varepsilon})}\abs{\nabla \eta_{\varepsilon}}^2=O\left(\varepsilon^2\delta\right)+O\left(e^{-\lambda_\varepsilon}\right),
\end{align*}
\begin{align*}
    \dfrac{1}{16\pi}\int_{{B}_{r_{\varepsilon}R}(p_\varepsilon)}\abs{\nabla v_{\varepsilon}}^2=\ln\left(\pi h(p_0)R^2\right)-1+o_{R}(1),
\end{align*}
\begin{align*}
 \int_{B_{\delta}(p_{\varepsilon})}\abs{\nabla  G^*}^2=O\left(\delta^2\right)
\end{align*}
and
\begin{align*}
\dfrac{1}{16\pi}\int_{{B}_{r_{\varepsilon}R}(p_\varepsilon)}\abs{\nabla  G^*}^2=O\left(r_{\varepsilon}^2\right)=O(e^{-\lambda_\varepsilon}).
\end{align*}
These imply that
\begin{align*}
    \dfrac{1}{16\pi}\int_{B_{r_{\varepsilon}R}(p_{\varepsilon})}\abs{\nabla u_{\varepsilon}}^2=\ln\left(\pi h(p_0)R^2\right)-1+O\left(\varepsilon^2 e^{-\frac{\lambda_\varepsilon
}{2}}\right)+O\left(e^{-\lambda_\varepsilon}\right)+o_{R}(1).
\end{align*}
On the neck, $o_\varepsilon(1)$ are the convergent rates in \autoref{convergence} and $\tilde u_{\varepsilon}\to w$.
\end{rem}

\biboptions{longnamesfirst,sort&compress}
\bibliographystyle{elsarticle-harv}

\bibliography{kw}

\begin{thebibliography}{24}
\expandafter\ifx\csname natexlab\endcsname\relax\def\natexlab#1{#1}\fi
\expandafter\ifx\csname url\endcsname\relax
  \def\url#1{\texttt{#1}}\fi
\expandafter\ifx\csname urlprefix\endcsname\relax\def\urlprefix{URL }\fi

\bibitem[{Brezis and Merle(1991)}]{BreMer91uniform}
Brezis, H., Merle, F., 1991. Uniform estimates and blow-up behavior for
  solutions of {$-\Delta u=V(x)e^u$} in two dimensions. Comm. Partial
  Differential Equations 16~(8-9), 1223--1253.
\newline\urlprefix\url{https://doi.org/10.1080/03605309108820797}

\bibitem[{Cast\'{e}ras(2015)}]{casteras2015mean}
Cast\'{e}ras, J.-B., 2015. A mean field type flow {II}: {E}xistence and
  convergence. Pacific J. Math. 276~(2), 321--345.
\newline\urlprefix\url{https://doi.org/10.2140/pjm.2015.276.321}

\bibitem[{Chang and Yang(1987)}]{chang1987prescribing}
Chang, S.-Y.~A., Yang, P.~C., 1987. Prescribing {G}aussian curvature on
  {$S^2$}. Acta Math. 159~(3-4), 215--259.
\newline\urlprefix\url{https://doi.org/10.1007/BF02392560}

\bibitem[{Chen and Lin(2002)}]{CheLin02sharp}
Chen, C.-C., Lin, C.-S., 2002. Sharp estimates for solutions of multi-bubbles
  in compact {R}iemann surfaces. Comm. Pure Appl. Math. 55~(6), 728--771.
\newline\urlprefix\url{https://doi.org/10.1002/cpa.3014}

\bibitem[{Chen and Lin(2003)}]{chen2003topological}
Chen, C.-C., Lin, C.-S., 2003. Topological degree for a mean field equation on
  {R}iemann surfaces. Comm. Pure Appl. Math. 56~(12), 1667--1727.
\newline\urlprefix\url{https://doi.org/10.1002/cpa.10107}

\bibitem[{Chen and Ding(1987)}]{chen1987scalar}
Chen, W., Ding, W., 1987. Scalar curvatures on {$S^2$}. Trans. Amer. Math. Soc.
  303~(1), 365--382.
\newline\urlprefix\url{https://doi.org/10.2307/2000798}

\bibitem[{Chen and Li(1991)}]{CheLi91classification}
Chen, W.~X., Li, C., 1991. Classification of solutions of some nonlinear
  elliptic equations. Duke Math. J. 63~(3), 615--622.
\newline\urlprefix\url{https://doi.org/10.1215/S0012-7094-91-06325-8}

\bibitem[{Chen et~al.(2020)Chen, Li, Li, and Xu}]{CheLiLiXu20gaussian}
Chen, X., Li, M., Li, Z., Xu, X., 2020. On {G}aussian curvature flow. Preprint.

\bibitem[{Ding et~al.(1997)Ding, Jost, Li, and
  Wang}]{DinJosLiWan97differential}
Ding, W., Jost, J., Li, J., Wang, G., 1997. The differential equation {$\Delta
  u=8\pi-8\pi he^u$} on a compact {R}iemann surface. Asian J. Math. 1~(2),
  230--248.
\newline\urlprefix\url{https://doi.org/10.4310/AJM.1997.v1.n2.a3}

\bibitem[{Ding et~al.(1999)Ding, Jost, Li, and Wang}]{ding1999existence}
Ding, W., Jost, J., Li, J., Wang, G., 1999. Existence results for mean field
  equations. Ann. Inst. H. Poincar\'{e} Anal. Non Lin\'{e}aire 16~(5),
  653--666.
\newline\urlprefix\url{https://doi.org/10.1016/S0294-1449(99)80031-6}

\bibitem[{Djadli(2008)}]{djadli2008existence}
Djadli, Z., 2008. Existence result for the mean field problem on {R}iemann
  surfaces of all genuses. Commun. Contemp. Math. 10~(2), 205--220.
\newline\urlprefix\url{https://doi.org/10.1142/S0219199708002776}

\bibitem[{Djadli and Malchiodi(2008)}]{DM2008existence}
Djadli, Z., Malchiodi, A., 2008. Existence of conformal metrics with constant
  {$Q$}-curvature. Ann. of Math. (2) 168~(3), 813--858.
\newline\urlprefix\url{https://doi.org/10.4007/annals.2008.168.813}

\bibitem[{Fontana(1993)}]{Fon93sharp}
Fontana, L., 1993. Sharp borderline {S}obolev inequalities on compact
  {R}iemannian manifolds. Comment. Math. Helv. 68~(3), 415--454.
\newline\urlprefix\url{https://doi.org/10.1007/BF02565828}

\bibitem[{Han(1990)}]{Han90prescribing}
Han, Z.-C., 1990. Prescribing {G}aussian curvature on {$S^2$}. Duke Math. J.
  61~(3), 679--703.
\newline\urlprefix\url{https://doi.org/10.1215/S0012-7094-90-06125-3}

\bibitem[{Kazdan and Warner(1974)}]{kazdan1974curvature}
Kazdan, J.~L., Warner, F.~W., 1974. Curvature functions for compact
  {$2$}-manifolds. Ann. of Math. (2) 99, 14--47.
\newline\urlprefix\url{https://doi.org/10.2307/1971012}

\bibitem[{Li and Zhu(2019)}]{LiZhu19convergence}
Li, J., Zhu, C., 2019. The convergence of the mean field type flow at a
  critical case. Calc. Var. Partial Differential Equations 58~(2), 60.
\newline\urlprefix\url{https://doi.org/10.1007/s00526-019-1507-2}

\bibitem[{Li(1999)}]{Li99harnack}
Li, Y.~Y., 1999. Harnack type inequality: the method of moving planes. Comm.
  Math. Phys. 200~(2), 421--444.
\newline\urlprefix\url{https://doi.org/10.1007/s002200050536}

\bibitem[{Li and Shafrir(1994)}]{LiSha94blowup}
Li, Y.~Y., Shafrir, I., 1994. Blow-up analysis for solutions of {$-\Delta
  u=Ve^u$} in dimension two. Indiana Univ. Math. J. 43~(4), 1255--1270.
\newline\urlprefix\url{https://doi.org/10.1512/iumj.1994.43.43054}

\bibitem[{Lin(2000)}]{Lin00topological}
Lin, C.-S., 2000. Topological degree for mean field equations on {$S^2$}. Duke
  Math. J. 104~(3), 501--536.
\newline\urlprefix\url{https://doi.org/10.1215/S0012-7094-00-10437-1}

\bibitem[{Malchiodi(2008)}]{Mal08morse}
Malchiodi, A., 2008. Morse theory and a scalar field equation on compact
  surfaces. Adv. Differential Equations 13~(11-12), 1109--1129.

\bibitem[{Moser(1971)}]{moser1971sharp}
Moser, J., 1971. A sharp form of an inequality by {N}. {T}rudinger. Indiana
  Univ. Math. J. 20, 1077--1092.
\newline\urlprefix\url{https://doi.org/10.1512/iumj.1971.20.20101}

\bibitem[{Struwe(2002)}]{struwe2002curvature}
Struwe, M., 2002. Curvature flows on surfaces. Ann. Sc. Norm. Super. Pisa Cl.
  Sci. (5) 1~(2), 247--274.

\bibitem[{Sun and Zhu(2020)}]{SunZhu20global}
Sun, L., Zhu, J., May 2020. Global existence and convergence of a flow to
  {K}azdan-{W}arner equation with non-negative prescribed function. Calc. Var.
  Partial Differential Equations, DOI:10.1007/s00526--020--01873--8.

\bibitem[{Zhu(2018)}]{Zhu18generalized}
Zhu, X., 2018. A generalized {T}rudinger-{M}oser inequality on a compact
  {R}iemannian surface. Nonlinear Anal. 169, 38--58.
\newline\urlprefix\url{https://doi.org/10.1016/j.na.2017.12.001}

\end{thebibliography}

\end{document}